\newcommand{\bq }{\begin{equation}}
\newcommand{\eq }{\end{equation}}
\numberwithin{equation}{section}
\definecolor{lime}{HTML}{A6CE39}
\DeclareRobustCommand{\orcidicon}{
	\begin{tikzpicture}
	\draw[lime, fill=lime] (0,0)
	circle [radius=0.16]
	node[white] {{\fontfamily{qag}\selectfont \tiny ID}};
	\draw[white, fill=white] (-0.0625,0.095)
	circle [radius=0.007];
	\end{tikzpicture}
	\hspace{-2mm}
}
\begin{document}

\title{Optimal error estimate of accurate second-order scheme for Volterra integrodifferential equations with tempered multi-term kernels}

\author{Wenlin Qiu}

\institute{
W. Qiu (\Letter) \at
MOE-LCSM, School of Mathematics and Statistics, Hunan Normal University, Changsha, Hunan 410081, P. R. China\\
\email{qwllkx12379@163.com}
}
\date{Received: date / Accepted: date}

\maketitle

\begin{abstract}
In this paper, we investigate and analyze numerical solutions for the Volterra integrodifferential equations with tempered multi-term kernels. Firstly we derive some regularity estimates of the exact solution. Then a temporal-discrete scheme is established by employing Crank-Nicolson technique and product integration (PI) rule for discretizations of the time derivative and tempered-type fractional integral terms, respectively, from which, nonuniform meshes are applied to overcome the singular behavior of the exact solution at $t=0$. Based on deduced regularity conditions, we prove that the proposed scheme is unconditionally stable, and possesses accurately temporal second-order convergence in $L_2$-norm. Numerical examples confirm the effectiveness of the proposed method.
\keywords{Volterra integrodifferential equations \and tempered multi-term kernels \and accurate second order \and stability  \and optimal error estimate}
\subclass{26A33 \and 45J05 \and 65M12 \and 65M15 \and 65M60}
\end{abstract}

\section{Introduction}
In this work, we consider the following nonlocal evolution equation with tempered multi-term kernels
   \begin{equation}\label{eq1.1}
   \begin{split}
      \frac{\partial u}{\partial t} &+ Au + \sum\limits_{j=1}^m (\beta_j *B_ju)(t)   = f(t),   \quad   t >0,  \quad 1\leq m <\infty, \\
      &  u(0)=u_0,
   \end{split}
   \end{equation}
in which $A$ is a self-adjoint positive-definite linear operator, not necessarily bounded operator, with compact inverse, defined on a dense subspace $D(A)$ of the Hilbert space $\mathbf{H}$, and $B_j$ is the self-adjoint linear operator in $\mathbf{H}$ such that $D(B_j)\supset D(A)$, $1 \leq j \leq m$. Letting
   \begin{equation*}
   \begin{split}
        \|w\|_q=\|A^{q/2}w\|=(A^qw,w)^{1/2},
   \end{split}
   \end{equation*}
   where $(\cdot,\cdot)$ is the inner product in $\mathbf{H}$ with the norm $\|\cdot\|$, we suppose that in this paper the operator $A$ dominates $B_j$ in the sense that \cite{Larsson}
   \begin{equation}\label{eq1.2}
   \begin{split}
        \left| (B_jw,v) \right| \leq C \|w\|_q \|v\|_{\vartheta}, \quad \vartheta=0,1,2, \quad \vartheta+q=2.
   \end{split}
   \end{equation}
Define the convolution
   \begin{equation*}
   \begin{split}
       (\beta_j *\varphi)(t):=\int_0^t \beta_j(t-s) \varphi(s) ds,  \quad 1 \leq j \leq m, \quad   t >0,
   \end{split}
   \end{equation*}
see \cite{Podlubny,Xu1}, where the tempered singular kernels
   \begin{equation}\label{eq1.3}
   \begin{split}
         \beta_j(t)=\frac{e^{-\kappa t}t^{\alpha_j -1}}{\Gamma(\alpha_j)},  \quad  \kappa\geq 0, \quad 0<\alpha_j <1,
   \end{split}
   \end{equation}
and $\Gamma(\cdot)$ denotes the Euler's Gamma function; see \cite{Podlubny}. Problems of type \eqref{eq1.1} with the single-term kernel ($m=1, \kappa=0$) can be considered as the model arising in heat transfer theory with memory, population dynamics and viscoelastic materials, see \cite{Friedman,Heard} and references therein; with $m=1$ and $\kappa>0$, \eqref{eq1.1} is called the tempered fractional integro-differential equation, in which the tempered fractional integral of Brownian motion, called tempered fractional Brownian motion, can show semi-long-range correlation, and then the increment of this process is called tempered fractional Gaussian noise, which affords a useful new stochastic model for the wind speed data, see \cite{Sabzikar}. Furthermore, when $A=0$, problem \eqref{eq1.1} with self-adjoint boundary conditions appears in a linear model for heat flow in a rectangular, orthotropic material with the memory \cite{Carslaw,Hannsgen,MacCamy}, from which the axes of orthotropy are parallel to the edges of the rectangle.

Due to its broad and practical applications, many scholars have carried out a series of theoretical and numerical studies regarding the problem of type \eqref{eq1.1}, e.g., in terms of theoretical research with $A=0$ and $f=0$, Hannsgen and Wheeler \cite{Hannsgen} first studied the asymptotic behavior as $t\rightarrow \infty$ of solutions of \eqref{eq1.1}, combined with a resolvent formula, which gave the long-time estimate of solutions with the weight function. Then based on \cite{Hannsgen}, Noren \cite{Noren1} proved the long-time estimate of solutions without the weight function by giving variable hypothesis of kernel functions $\beta_j(t)$. Subsequently, Noren \cite{Noren1} derived that $u'(t)$ is integrable uniformly with respect to the parameters of solutions under some assumptions regarding kernel functions $\beta_j(t)$. With the maturity of theoretical studies, numerical researches have gradually developed. Xu proved the weighted $L^1$ asymptotic stability of the numerical solutions by temporal backward Euler (BE) method and first-order convolution quadrature rule. Hereafter, Xu considered the weighted $L^1$ asymptotic convergence of the numerical solutions by same temporal approximation. After that, in order to improve accuracy to the second order for time, Xu proved the weighted $L^1$ asymptotic stability \cite{Xu4} and weighted $L^1$ asymptotic convergence \cite{Xu5} by second-order backward differentiation formula (BDF) and second-order convolution quadrature, from which, temporal convergence can not reach accurate second order due to the singular behavior of the exact solution at the initial point $t=0$. Besides, some studies of \eqref{eq1.1} with $m=2$ were considered recently, see \cite{Cao,Qiu}, which only achieved temporal first-order accuracy; also, there has been a lot of excellent work on similar multi-term problems, see \cite{Jin,Liu,Zhou}. Thus, the limitations of existing methods motivate us to carry out the following research.

\vskip 0.2mm
The major aim of this work is to consider an accurate second-order numerical scheme for Volterra integrodifferential equations with tempered-type multi-term singular kernels. This time-discrete scheme is constructed by the Crank-Nicolson method and PI rule on the nonuniform meshes. In addition, the main contribution of this work can be summarized in the following aspects:
 $(i)$ Based on certain suitable assumptions, we derive the regularity estimates of the exact solution of problem \eqref{eq1.1};
$(ii)$ We employ the graded meshes in time to establish the discrete scheme, which can compensate for the singular behaviour of the exact solutions at $t=0$;
$(iii)$ The energy method is utilized to deduce the stability and convergence of the proposed scheme;
$(iv)$ Provided numerical examples satisfying the regularity substantiate the theoretical analyses, which can reflect and illustrate the accurate second-order accuracy for time by choosing the grading index $\gamma\geq 2/(1+\alpha)$, $\alpha=\min\limits_{1 \leq j \leq m}\{\alpha_j\}$.

\vskip 0.2mm
The rest of this article is arranged as follows. In section \ref{sec2}, the regularity of exact solutions is deduced on the $L_2$ space. Section \ref{sec3} gives some notations and formulates a time-discrete scheme. Then the stability and convergence of proposed scheme are proved in section \ref{sec4}. Section \ref{sec5} provides several numerical examples to verify our analysis. Finally, some concluding remarks are presented in section \ref{sec6}.

\vskip 0.2mm
 Throughout this paper, $C$ denotes a generic positive constant that is independent of the space-time step sizes, and may be not necessarily the same on each occurrence.

\section{Regularity of exact solutions} \label{sec2} For the further analysis, denoting
   \begin{equation}\label{eq2.1}
   \begin{split}
       v(t)=e^{\kappa t}u(t), \quad g(t)=e^{\kappa t}f(t),  \quad  t > 0,
   \end{split}
   \end{equation}
then \eqref{eq1.1} is converted to
   \begin{equation}\label{eq2.2}
   \begin{split}
      \frac{\partial v}{\partial t} &+ Av + \sum\limits_{j=1}^m(\omega_{\alpha_j}*B_jv)(t) - \kappa v  = g(t),   \quad   t >0,  \quad m \geq 1, \\
      &  v(0)=u_0,
   \end{split}
   \end{equation}
where the Abel kernel $\omega_{\alpha_j}(t)=\frac{t^{\alpha_j-1}}{\Gamma(\alpha_j)}$ with $1 \leq j \leq m$.

\vskip 0.1in
Below, we shall give some regularity estimates of the exact solution of \eqref{eq1.1}. That is, we wish to present that
   \begin{equation}\label{eq2.3}
   \begin{split}
        & t\|Av''(t)\| + \|v''(t)\| +  \|Av'(t)\| \leq C t^{\alpha-1}, \quad \alpha=\min\limits_{1 \leq j \leq m}\{\alpha_j\}, \\
        & t\|Au''(t)\| + \|u''(t)\| +  \|Au'(t)\| \leq C e^{-\kappa t}t^{\alpha-1},  \quad   t \rightarrow 0^{+}.
   \end{split}
   \end{equation}
Later $\|\cdot\|_{L_{p}}$ denotes the norm on the Sobolev space $L_{p}(\Omega)$ with $1< p \leq \infty$; besides, Lipschitz domain $\Omega\subseteq \mathbb{R}^d$, $d\geq 1$. In fact, before establishing the regularity estimates of $u$, we only need to consider $v$ defined in \eqref{eq2.2}. By splitting the solution of \eqref{eq2.2} be into $v(t)=v_1(t)+v_2(t)$, which implies that for $t > 0$, we shall further solve
   \begin{equation}\label{eq2.4}
   \begin{split}
       v_1'(t) + Av_1(t) = g(t),   \quad v_1(0) = u_0,
   \end{split}
   \end{equation}
   and
   \begin{equation}\label{eq2.5}
   \begin{split}
       v_2'(t) + Av_2(t) = - \sum\limits_{j=1}^{m} \int_0^t \omega_{\alpha_j}(t-s) B_j v(s) ds + \kappa v(t),   \quad v_2(0) = 0,
   \end{split}
   \end{equation}
respectively. Then, we rewrite the solution of \eqref{eq2.4} as
   \begin{equation}\label{eq2.6}
   \begin{split}
       v_1(t) = E(t)u_0 + \int_0^t E(t-s) g(s) ds,
   \end{split}
   \end{equation}
where $E(t)=\exp(-At)$ is the analytic semigroup in space $L_{p}(\Omega)$, generated by $-A$ (cf.~\cite{Pazy}), such that
   \begin{equation}\label{eq2.7}
   \begin{split}
        \|E(t)w\|_{L_{p}} + t\|AE(t)w\|_{L_{p}} + t^2\|A^2E(t)w\|_{L_{p}} \leq C\|w\|_{L_{p}}, \quad t\in (0,T]
   \end{split}
   \end{equation}
for any $w\in L_{p}(\Omega)$. Then similar to \cite{Larsson,Mustapha}, we introduce the following key lemmas.

\begin{lemma}\label{lemma2.1}
Assume that $p\in (1,\infty)$ and $\varphi(t)\in L_{p}(\Omega)$, $t>0$. Suppose that $\|\varphi(t)\|_{L_{p}}\leq Ct^{\alpha-1}$ and $\|\varphi'(t)\|_{L_{p}}\leq Ct^{\alpha-2}$ with $\alpha=\min\limits_{1 \leq j \leq m}\{\alpha_j\}$. Then it holds that
   \begin{equation*}
   \begin{split}
        \left\| \int_0^t AE(t-s) \varphi(s) ds  \right\|_{L_{p}} \leq Ct^{\alpha-1}, \quad  \alpha\in (0,1).
   \end{split}
   \end{equation*}
\end{lemma}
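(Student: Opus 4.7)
The idea is to isolate the two distinct singularities of the integrand---$\varphi(s)$ is singular at $s=0$ while the smoothing kernel $AE(t-s)$ is singular at $s=t$---by splitting the convolution at the midpoint:
\beas
\int_0^t AE(t-s)\varphi(s)\,ds = \int_0^{t/2} AE(t-s)\varphi(s)\,ds + \int_{t/2}^t AE(t-s)\varphi(s)\,ds =: I_1 + I_2.
\eeas
On $I_1$ the kernel is smooth since $t-s\geq t/2$, so I will use the semigroup smoothing estimate directly; on $I_2$ the factor $\varphi$ is smooth (away from $0$), so I will integrate by parts via the identity $\frac{d}{ds}E(t-s)=AE(t-s)$ to transfer the $A$ onto $\varphi$.

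For $I_1$, the estimate \eqref{eq2.7} gives $\|AE(\tau)w\|_{L_p}\leq C\tau^{-1}\|w\|_{L_p}$. Combining this with the hypothesis $\|\varphi(s)\|_{L_p}\leq Cs^{\alpha-1}$ and the fact that $t-s\geq t/2$ on $[0,t/2]$ yields
\beas
\|I_1\|_{L_p}\leq \frac{2C}{t}\int_0^{t/2} s^{\alpha-1}\,ds = \frac{2C}{\alpha t}\left(\frac{t}{2}\right)^{\alpha} \leq Ct^{\alpha-1},
\eeas
which is the desired bound for the first piece.

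For $I_2$, a direct application of the kernel bound would give the divergent integral $\int_{t/2}^t (t-s)^{-1}\,ds$, so I integrate by parts:
\beas
I_2 = \int_{t/2}^t \frac{d}{ds}\bigl[E(t-s)\bigr]\varphi(s)\,ds = \varphi(t) - E(t/2)\varphi(t/2) - \int_{t/2}^t E(t-s)\varphi'(s)\,ds.
\eeas
The hypothesis on $\varphi$ bounds the first boundary term by $Ct^{\alpha-1}$, while the contraction $\|E(\tau)w\|_{L_p}\leq C\|w\|_{L_p}$ from \eqref{eq2.7} together with $\|\varphi(t/2)\|_{L_p}\leq C(t/2)^{\alpha-1}$ bounds the second by $Ct^{\alpha-1}$. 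For the remaining integral, the hypothesis $\|\varphi'(s)\|_{L_p}\leq Cs^{\alpha-2}$ and the same contraction give
\beas
\left\|\int_{t/2}^t E(t-s)\varphi'(s)\,ds\right\|_{L_p} \leq C\int_{t/2}^t s^{\alpha-2}\,ds = \frac{C(2^{1-\alpha}-1)}{1-\alpha}\,t^{\alpha-1},
\eeas
since $\alpha\in(0,1)$. Assembling $I_1$ and $I_2$ yields the claimed estimate.

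The main obstacle is that a naive integration by parts over the entire interval $[0,t]$ fails: the boundary term $E(t)\varphi(0)$ is not controlled under the hypotheses, which only provide $\|\varphi(s)\|_{L_p}\lesssim s^{\alpha-1}$ near the origin (and $\alpha-1<0$). The midpoint split is precisely the device that replaces the ill-defined $\varphi(0)$ with the controlled quantity $\varphi(t/2)$, while simultaneously keeping the singular kernel $AE(t-s)$ away from the region where $\varphi$ is large.
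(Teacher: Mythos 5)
Your proof is correct. The paper itself gives no argument for this lemma---it simply defers to \cite[Lemma 5.1]{Larsson}---and your midpoint splitting (semigroup smoothing on $[0,t/2]$ where $t-s\geq t/2$, integration by parts via $\frac{d}{ds}E(t-s)=AE(t-s)$ on $[t/2,t]$ where the hypothesis on $\varphi'$ applies) is precisely the standard argument underlying that citation, with all constants worked out correctly.
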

\begin{proof} The desired result can be proved similarly by \cite[Lemma 5.1]{Larsson}.
\end{proof}

\begin{lemma}\label{lemma2.2}
Let $p\in (1,\infty)$ and $\varphi(t)\in L_{p}(\Omega)$ for $t>0$, then
   \begin{equation*}
   \begin{split}
        \left\| \int_0^t AE(t-s) \int_0^s \omega_{\alpha_j}(s-\sigma)\varphi(\sigma)d\sigma  ds  \right\|_{L_{p}} \leq C \int_0^t \omega_{\alpha_j}(t-\sigma) \|\varphi(\sigma)\|_{L_{p}} d\sigma.
   \end{split}
   \end{equation*}
\end{lemma}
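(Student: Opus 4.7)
The plan is to apply Fubini's theorem to swap the order of integration, reducing the estimate to an operator-norm bound. Writing $I(t)$ for the left-hand side of the claimed inequality, interchanging the $s$ and $\sigma$ integrals, and substituting $u = s - \sigma$, one obtains
\begin{equation*}
I(t) = \int_0^t K(t-\sigma)\,\varphi(\sigma)\,d\sigma, \qquad K(\tau) := \int_0^\tau AE(\tau-u)\,\omega_{\alpha_j}(u)\,du.
\end{equation*}
By Minkowski's integral inequality, the target estimate follows once I prove the uniform bound $\|K(\tau)w\|_{L_p} \leq C\,\omega_{\alpha_j}(\tau)\,\|w\|_{L_p}$ for every $w \in L_p(\Omega)$.

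For this bound I would use $\partial_u E(\tau-u) = AE(\tau-u)$ and integrate by parts in $u$. Because $\omega_{\alpha_j}(0) = +\infty$, the boundary contribution at $u = 0$ is singular; I would therefore first integrate on $(\epsilon,\tau)$, obtaining
\begin{equation*}
\int_\epsilon^\tau AE(\tau-u)\,\omega_{\alpha_j}(u)\,du = \omega_{\alpha_j}(\tau)\,I - \omega_{\alpha_j}(\epsilon)\,E(\tau-\epsilon) - \int_\epsilon^\tau \omega_{\alpha_j}'(u)\,E(\tau-u)\,du,
\end{equation*}
and then add and subtract $E(\tau)$ inside the last integrand to isolate $E(\tau)[\omega_{\alpha_j}(\tau)-\omega_{\alpha_j}(\epsilon)]$. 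The two singular $\omega_{\alpha_j}(\epsilon)$-terms then recombine into $\omega_{\alpha_j}(\epsilon)\bigl[E(\tau)-E(\tau-\epsilon)\bigr]w$, which vanishes in $L_p$ as $\epsilon\to0^{+}$: the smoothing estimate $\|E(\tau)w-E(\tau-\epsilon)w\|_{L_p} \leq C\epsilon\tau^{-1}\|w\|_{L_p}$ (from \eqref{eq2.7}) combined with $\omega_{\alpha_j}(\epsilon) = O(\epsilon^{\alpha_j-1})$ yields a factor $\epsilon^{\alpha_j}\to0$. Passing to the limit produces the finite representation
\begin{equation*}
K(\tau)w = \omega_{\alpha_j}(\tau)\bigl[I - E(\tau)\bigr]w - \int_0^\tau \omega_{\alpha_j}'(u)\bigl[E(\tau-u)-E(\tau)\bigr]w\,du.
\end{equation*}

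The resulting representation is straightforward to bound. The first term contributes at most $2\,\omega_{\alpha_j}(\tau)\,\|w\|_{L_p}$ by the contractivity bound $\|E(\tau)w\|_{L_p}\le C\|w\|_{L_p}$ from \eqref{eq2.7}. For the integral I would split at $u = \tau/2$: on $(0,\tau/2)$ the estimate $\|E(\tau-u)w - E(\tau)w\|_{L_p} \leq Cu\tau^{-1}\|w\|_{L_p}$ (which follows by writing $E(\tau)-E(\tau-u) = -\int_{\tau-u}^{\tau} AE(s)\,ds$ and invoking \eqref{eq2.7}) tames the strong singularity $|\omega_{\alpha_j}'(u)| \sim u^{\alpha_j-2}$ and produces a contribution of order $\tau^{\alpha_j-1}\|w\|_{L_p} \sim \omega_{\alpha_j}(\tau)\|w\|_{L_p}$; on $(\tau/2,\tau)$ the uniform bound $\|E(\tau-u)w - E(\tau)w\|_{L_p} \leq 2\|w\|_{L_p}$ together with $|\omega_{\alpha_j}'(u)| \leq C\tau^{\alpha_j-2}$ gives the same order. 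This establishes the desired operator bound on $K(\tau)$.

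The principal difficulty is the bookkeeping of the endpoint singularity at $u=0$: a naive application of $\|AE(t)w\|_{L_p}\leq Ct^{-1}\|w\|_{L_p}$ directly to $K(\tau)$ produces a logarithmic divergence, so the integration-by-parts regularization is essential to extract the correct $\omega_{\alpha_j}(\tau)$ scaling. Once this step is in place, the remainder of the argument reduces to routine use of the analytic-semigroup bound stated in \eqref{eq2.7}, and substitution back into the Fubini-Minkowski chain completes the proof.
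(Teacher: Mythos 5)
Your proof is correct and follows essentially the same route as the paper: exchange the order of integration so the left-hand side becomes $\int_0^t K(t-\sigma)\varphi(\sigma)\,d\sigma$ with $K(\tau)=\int_0^\tau AE(\tau-u)\,\omega_{\alpha_j}(u)\,du$, then establish the operator bound $\|K(\tau)w\|_{L_p}\le C\,\omega_{\alpha_j}(\tau)\,\|w\|_{L_p}$ and conclude by Minkowski's integral inequality. The only difference is that the paper gets the kernel bound by citing Lemma~\ref{lemma2.1} (itself deferred to Larsson et al.), whereas you prove it from scratch via the integration-by-parts regularization at $u=0$ --- which is precisely the argument behind that cited lemma, so your write-up is the self-contained version of the paper's two-line proof.
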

\begin{proof}
  Exchanging the order of integration and swapping variables, we have
   \begin{equation*}
   \begin{split}
       \int_0^t AE(t-s) \int_0^s \omega_{\alpha_j}(s-\sigma)\varphi(\sigma)d\sigma  ds  = \int_0^t \left[ \int_0^{t-\sigma} AE(t-\sigma-\vartheta) \omega_{\alpha_j}(\vartheta) d \vartheta \right]    \varphi(\sigma) d\sigma.
   \end{split}
   \end{equation*}
Then an application of Lemma \ref{lemma2.1} completes the proof.
\end{proof}

\begin{lemma}\label{lemma2.3}
Suppose that $\|Au_0-g(0)\| + \|g(t)\| \leq C$ and $\|(Au_0-g(0))E'(t)\| + \|g'(t)\| + t\|g''(t)\|  \leq Ct^{\alpha-1}$ with $\alpha=\min\limits_{1 \leq j \leq m}\{\alpha_j\}$. Then $v_1(t)$ denoted by \eqref{eq2.4} satisfies
   \begin{equation*}
   \begin{split}
         \|Av_1(t)\|  + \|v_1'(t)\|\leq C,   \quad   \|Av_1'(t)\|   + \|v_1''(t)\|  \leq C t^{\alpha-1}.
   \end{split}
   \end{equation*}
\end{lemma}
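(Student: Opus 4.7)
My plan is to work directly from the variation-of-constants representation \eqref{eq2.6} and reduce each of the four quantities $\|Av_1(t)\|$, $\|v_1'(t)\|$, $\|Av_1'(t)\|$, $\|v_1''(t)\|$ to either the smoothing bound \eqref{eq2.7} for $E(t)$ or an application of Lemma \ref{lemma2.1}. The key algebraic step is to rewrite the convolution term so that the problematic factor of $g$ is replaced by $g'$, for which the hypothesis supplies the required $t^{\alpha-1}$ bound.

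First I would integrate by parts (equivalently, change variables $\tau=t-s$, differentiate, and change back) in \eqref{eq2.6} to obtain
\begin{equation*}
v_1'(t) \;=\; E(t)\bigl(g(0)-Au_0\bigr) \;+\; \int_0^t E(t-s)\,g'(s)\,ds.
\end{equation*}
The point is that the boundary term absorbs the singular initial data, while the remaining convolution is against $g'$, which satisfies $\|g'(s)\|\le Cs^{\alpha-1}$. Combined with the uniform bound $\|E(t)w\|\le C\|w\|$ from \eqref{eq2.7} and $\|Au_0-g(0)\|\le C$, this yields $\|v_1'(t)\|\le C+C\int_0^t s^{\alpha-1}ds\le C$. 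Then $\|Av_1(t)\|\le C$ is immediate from the identity $Av_1(t)=g(t)-v_1'(t)$ together with $\|g(t)\|\le C$.

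For the second-derivative bounds I would differentiate the above display once more. Applying Leibniz' rule and using $E'(t)=-AE(t)$ gives
\begin{equation*}
v_1''(t) \;=\; -E'(t)\bigl(Au_0-g(0)\bigr) \;+\; g'(t) \;-\; \int_0^t AE(t-s)\,g'(s)\,ds.
\end{equation*}
The first term is controlled by the hypothesis $\|(Au_0-g(0))E'(t)\|\le Ct^{\alpha-1}$ and the second by $\|g'(t)\|\le Ct^{\alpha-1}$. For the integral, I would invoke Lemma \ref{lemma2.1} with $\varphi=g'$: the required bounds $\|\varphi(t)\|\le Ct^{\alpha-1}$ and $\|\varphi'(t)\|\le Ct^{\alpha-2}$ are supplied directly by the assumptions $\|g'(t)\|\le Ct^{\alpha-1}$ and $t\|g''(t)\|\le Ct^{\alpha-1}$. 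This produces $\|v_1''(t)\|\le Ct^{\alpha-1}$, and then $\|Av_1'(t)\|\le Ct^{\alpha-1}$ follows at once from the differentiated evolution equation $Av_1'(t)=g'(t)-v_1''(t)$.

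The main obstacle is the algebraic rewriting in the first step. A naive differentiation of $\int_0^t E(t-s)g(s)\,ds$ produces $g(t)-\int_0^t AE(t-s)g(s)\,ds$, and Lemma \ref{lemma2.1} cannot be applied to that convolution because $g$ only satisfies a uniform bound, not a $t^{\alpha-1}$ singular bound. Absorbing the initial data into $E(t)(g(0)-Au_0)$ and shifting the convolution onto $g'$ is what makes all four estimates fall out of Lemma \ref{lemma2.1} and \eqref{eq2.7}; after that the argument is essentially mechanical.
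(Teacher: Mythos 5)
Your argument is correct and complete: the integration by parts that converts $\int_0^t E(t-s)g(s)\,ds$ into $E(t)g(0)+\int_0^t E(t-s)g'(s)\,ds$, followed by \eqref{eq2.7} for the first-order bounds and Lemma \ref{lemma2.1} with $\varphi=g'$ for the second-order ones, is exactly the standard semigroup argument. The paper itself gives no proof beyond citing \cite[Lemma 2.3]{Mustapha}, and that cited lemma is proved along precisely these lines, so you have in effect supplied the details the paper outsources.
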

\begin{proof}
    We can finish the proof analogously by \cite[Lemma 2.3]{Mustapha}.
\end{proof}

\vskip 0.1in
Below we derive the regularity estimate with respect to $v(t)$.
\begin{theorem}\label{theorem2.4}
Based on the assumptions of Lemma \ref{lemma2.3}, then the solution $v(t)$ of \eqref{eq2.2} satisfies that
   \begin{equation*}
   \begin{split}
       \|v'(t)\|   + \|Av(t)\| \leq C\|u_0\|, \quad 0< t \leq T.
   \end{split}
   \end{equation*}
   \end{theorem}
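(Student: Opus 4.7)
The plan is to exploit the additive splitting $v = v_1 + v_2$ introduced in \eqref{eq2.4}--\eqref{eq2.5}: Lemma~\ref{lemma2.3} already controls $v_1$ in exactly the norms we need, so it suffices to derive an analogous bound for $v_2$. I would do this by combining Lemma~\ref{lemma2.2}, the semigroup smoothing estimate \eqref{eq2.7}, and the domination inequality \eqref{eq1.2}, and then close the resulting integral inequality through a weakly singular Gronwall argument. The bound on $\|v'(t)\|$ is finally read off directly from the equation \eqref{eq2.2}.

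By Lemma~\ref{lemma2.3}, $\|Av_1(t)\| + \|v_1'(t)\| \leq C$, so I only need to control $v_2$. Duhamel's formula applied to \eqref{eq2.5} gives
\begin{equation*}
 v_2(t) = -\sum_{j=1}^m \int_0^t E(t-s) \int_0^s \omega_{\alpha_j}(s-\sigma) B_j v(\sigma)\,d\sigma\,ds - \kappa \int_0^t E(t-s) v(s)\,ds.
\end{equation*}
Acting by $A$ term by term, Lemma~\ref{lemma2.2} (with $p=2$, $\varphi = B_j v$) handles the convolution pieces, while the $\kappa v$ piece is treated by the standard splitting
\begin{equation*}
 A\int_0^t E(t-s) v(s)\,ds = [I - E(t)] v(t) + \int_0^t AE(t-s)[v(s) - v(t)]\,ds
\end{equation*}
together with \eqref{eq2.7}, which absorbs the non-integrable $(t-s)^{-1}$ singularity of $AE(t-s)$ against the H\"older-type difference $v(s) - v(t)$.

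Next, the domination \eqref{eq1.2} with $\vartheta = 0$, $q = 2$ (combined with duality in $\mathbf{H}$) gives $\|B_j w\| \leq C\|Aw\|$, so that assembling the previous step yields a closed integral inequality of the form
\begin{equation*}
 \|Av(t)\| \leq C\|u_0\| + C\sum_{j=1}^m \int_0^t \omega_{\alpha_j}(t-\sigma)\|Av(\sigma)\|\,d\sigma + C\int_0^t \|Av(s)\|\,ds,
\end{equation*}
where the constant absorbs the $v_1$-contribution (carrying $\|u_0\|$) and the data terms. A generalized (weakly singular) Gronwall inequality for finitely many Abel kernels $\omega_{\alpha_j}$ with $\alpha_j \in (0,1)$ then closes the estimate to $\|Av(t)\| \leq C\|u_0\|$ for $t \in (0,T]$. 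Finally, rearranging \eqref{eq2.2} as
\begin{equation*}
 v'(t) = -Av(t) - \sum_{j=1}^m (\omega_{\alpha_j} * B_j v)(t) + \kappa v(t) + g(t),
\end{equation*}
and bounding the right-hand side using the $Av$-estimate just obtained together with $\|g(t)\| \leq C$ delivers the matching bound for $\|v'(t)\|$.

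The main obstacle I anticipate is the singular Gronwall step: juggling several weakly singular Abel kernels simultaneously requires either iterating a single-kernel singular Gronwall lemma or appealing to a multi-kernel variant, and the auxiliary $\kappa v$ term forces the careful treatment of the diagonal singularity of $AE(t-s)$ sketched above. Everything else is accounting built on top of Lemmas~\ref{lemma2.1}--\ref{lemma2.3}.
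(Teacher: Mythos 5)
Your overall strategy coincides with the paper's: split $v=v_1+v_2$, invoke Lemma~\ref{lemma2.3} for $v_1$, apply Duhamel to \eqref{eq2.5}, treat the memory terms with Lemma~\ref{lemma2.2} and the domination $\|B_jw\|\leq C\|Aw\|$ from \eqref{eq1.2}, and close with a weakly singular Gr\"onwall inequality. The one place you genuinely diverge is also the one step that does not go through as written: your treatment of the $\kappa$-term
\begin{equation*}
A\int_0^t E(t-s)\,v(s)\,ds=[I-E(t)]v(t)+\int_0^t AE(t-s)\bigl[v(s)-v(t)\bigr]\,ds
\end{equation*}
requires a H\"older-type modulus for $v$ to compensate the $(t-s)^{-1}$ singularity of $AE(t-s)$. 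But such a modulus comes from a bound on $v'$, which is precisely part of what Theorem~\ref{theorem2.4} asserts; at this stage of the argument no H\"older continuity of $v$ is available, so the step is circular. The paper avoids the issue entirely by commuting $A$ through the semigroup, $A\int_0^t E(t-s)v(s)\,ds=\int_0^t E(t-s)Av(s)\,ds$, and then using only the uniform bound $\|E(\cdot)w\|\leq C\|w\|$ from \eqref{eq2.7}; this yields the harmless term $C\int_0^t\|Av(s)\|\,ds$ that feeds directly into the Gr\"onwall inequality you wrote down. You should replace your splitting by this commutation.

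Two smaller remarks. First, your single combined integral inequality for $\|Av(t)\|$ (with both Abel kernels and a regular kernel) is equivalent to the paper's two-stage closure (first a singular Gr\"onwall for $\|Av_2\|$ in terms of $\int_0^t\|Av\|$, then a standard Gr\"onwall for $\|Av\|$), and either form is acceptable given a multi-kernel singular Gr\"onwall lemma such as \cite[Lemma~1]{Chen}. Second, your final step of reading $\|v'(t)\|\leq C$ directly off the equation \eqref{eq2.2} once $\|Av(t)\|\leq C$ is known is in fact cleaner than the paper's route, which instead runs a separate Gr\"onwall argument for $\|v_2'\|$ and adds the Lemma~\ref{lemma2.3} bound for $\|v_1'\|$; both are valid.
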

\begin{proof}
  By \eqref{eq2.5}-\eqref{eq2.7} and the assumption \eqref{eq1.2} ($A$ dominates $B_j$), then $v_2(t)$ given by \eqref{eq2.5} satisfies
   \begin{equation*}
   \begin{split}
         \|v_2'(t)\| &\leq  \sum\limits_{j=1}^{m} \left\|\int_0^t \omega_{\alpha_j}(t-s) (B_jA^{-1})A v(s) ds\right\|  \\
         & +  \|Av_2(t)\| + \kappa \Big( \|v_1(t)\| + \|v_2(t)\| \Big) \\
         & \leq  \sum\limits_{j=1}^{m} \int_0^t \omega_{\alpha_j}(t-s) \left\|A v(s)\right\| ds +  \|Av_2(t)\| \\
         & + \kappa \left( \|u_0\| + \int_{0}^{t}\|g(s)\|ds \right) + \kappa\int_{0}^{t}\|v_2'(s)\|ds.
   \end{split}
   \end{equation*}
Then using Duhamel's principle and \eqref{eq2.3}, we deduce
   \begin{equation}\label{eq2.8}
   \begin{split}
        v_2(t) & = - \int_0^t E(t-s) \sum\limits_{j=1}^{m} \int_0^s \omega_{\alpha_j}(s-\sigma)B_jv(\sigma)d\sigma ds  \\
               & - \kappa \int_0^t E(t-s)v(s)ds := v_{21} + v_{22}.
   \end{split}
   \end{equation}
From Lemma \ref{lemma2.2} and Lemma \ref{lemma2.3}, we yield
   \begin{equation*}
   \begin{split}
        \|Av_{21}\| & \leq \left\| \int_0^t A E(t-s) \sum\limits_{j=1}^{m} \int_0^s \omega_{\alpha_j}(s-\sigma)(B_jA^{-1}) Av(\sigma)d\sigma ds  \right\| \\
         & \leq C \sum\limits_{j=1}^{m}\int_0^t \omega_{\alpha_j}(t-\sigma) \left(\left\|Av_1(\sigma)\right\| + \left\|Av_2(\sigma)\right\|  \right)  d\sigma  \\
         & \leq C \sum\limits_{j=1}^{m} \left(  t^{\alpha_j} + \int_0^t \omega_{\alpha_j}(t-\sigma) \left\|Av_2(\sigma)\right\|d\sigma   \right),
   \end{split}
   \end{equation*}
and \eqref{eq2.7} gives
   \begin{equation*}
   \begin{split}
        \|Av_{22}\| & \leq  \kappa \left\| \int_0^t  E(t-s) Av(s) ds  \right\| \leq C \int_0^t \| Av(s)\| ds.
   \end{split}
   \end{equation*}
Therefore, we have
   \begin{equation*}
   \begin{split}
        \|Av_{2}(t)\| & \leq C\left(  t^{\alpha} + \int_0^t \| Av(s)\| ds \right) + \int_0^t \omega_{\alpha}(t-\sigma) \left\|Av_2(\sigma)\right\|d\sigma,
   \end{split}
   \end{equation*}
from which, an application of Gr\"{o}nwall's lemma (see \cite[Lemma 1]{Chen}) yields
   \begin{equation*}
   \begin{split}
        \|Av_{2}(t)\| & \leq C\left(  t^{\alpha} + \int_0^t \| Av(s)\| ds \right),
   \end{split}
   \end{equation*}
which follows from Lemma \ref{lemma2.3} that $\|Av(t)\| \leq C + C\int_0^t \| Av(s)\| ds$, then we have
$ \|Av(t)\|  \leq C$ and  $\|Av_{2}(t)\| \leq  C t^{\alpha}$. Thus, we can get
   \begin{equation*}
   \begin{split}
         \|v_2'(t)\| & \leq  C \left(\sum\limits_{j=1}^{m} t^{\alpha_j} +  \|u_0\|\right) + \kappa\int_{0}^{t}\|v_2'(s)\|ds,
   \end{split}
   \end{equation*}
then Gr\"{o}nwall's lemma and Lemma \ref{lemma2.3} gives
   \begin{equation}\label{eq2.9}
   \begin{split}
        \|v'(t)\|  &\leq \|v_1'(t)\| + \|v_2'(t)\| \leq C\|u_0\|,
   \end{split}
   \end{equation}
which completes the proof.
\end{proof}

\vskip 0.1in
We will give other estimates of $v(t)$ regarding the second time derivative.
\begin{theorem}\label{theorem2.5}
Based on the assumptions of Theorem \ref{theorem2.4}. Then it follows that
   \begin{equation*}
   \begin{split}
        \left\| v''(t) \right\| + \left\| Av'(t) \right\|   \leq C t^{\alpha -1}\left\| u_0\right\|_2, \quad \alpha=\min\limits_{1 \leq j \leq m}\{\alpha_j\}, \quad t\in (0,T].
   \end{split}
   \end{equation*}
   \end{theorem}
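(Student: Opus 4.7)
The plan is to mimic the strategy of Theorem~\ref{theorem2.4} but with one additional time differentiation. Decompose $v=v_1+v_2$ via \eqref{eq2.4}--\eqref{eq2.5}; Lemma~\ref{lemma2.3} already delivers $\|Av_1'(t)\|+\|v_1''(t)\|\leq Ct^{\alpha-1}$, so the only remaining task is to establish the same bound (times $\|u_0\|_2$) for $\|Av_2'(t)\|$ and $\|v_2''(t)\|$.

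I would start by differentiating \eqref{eq2.5} in time. Setting
\[
   K(t):=-\sum_{j=1}^{m}(\omega_{\alpha_j}*B_jv)(t)-\kappa v(t),
\]
one has $v_2'(t)+Av_2(t)=K(t)$, and evaluating at $t=0$ gives $v_2'(0)=-\kappa u_0$. A shift of variable inside the convolution produces
\[
   K'(t)=-\sum_{j=1}^{m}\bigl[\omega_{\alpha_j}(t)B_ju_0+(\omega_{\alpha_j}*B_jv')(t)\bigr]-\kappa v'(t),
\]
so Duhamel's principle applied to the linear ODE $v_2''+Av_2'=K'$ with data $-\kappa u_0$ yields
\[
   Av_2'(t)=-\kappa AE(t)u_0+\int_0^{t}AE(t-s)K'(s)\,ds.
\]

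The next step is to bound each piece of the right-hand side in $\mathbf{H}$. The leading term satisfies $\|\kappa AE(t)u_0\|\leq C\|u_0\|_2$ by \eqref{eq2.7}. For the contribution coming from $\omega_{\alpha_j}(s)B_ju_0$ I would use the factorization $B_j=(B_jA^{-1})A$ (its boundedness following from \eqref{eq1.2}) and apply Lemma~\ref{lemma2.1} with $\varphi(s)=\omega_{\alpha_j}(s)(B_jA^{-1})Au_0$, whose hypotheses hold with the constant scaling like $\|u_0\|_2$; this gives a bound $Ct^{\alpha_j-1}\|u_0\|_2\leq Ct^{\alpha-1}\|u_0\|_2$ on $(0,T]$. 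For the convolution contribution $(\omega_{\alpha_j}*B_jv')(s)$, Lemma~\ref{lemma2.2} combined with $\|B_jw\|\leq C\|Aw\|$ (a consequence of \eqref{eq1.2} with $q=2$, $\vartheta=0$) produces the bound $C\int_0^{t}\omega_{\alpha_j}(t-\sigma)\|Av'(\sigma)\|\,d\sigma$. The remaining term $\kappa v'(s)$ is not of convolution form, so I would substitute $v'=g-Av-\sum_{j}(\omega_{\alpha_j}*B_jv)+\kappa v$ from \eqref{eq2.2}, which expresses it through quantities already controlled by Theorem~\ref{theorem2.4} and by another application of Lemmas~\ref{lemma2.1}--\ref{lemma2.2}.

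Assembling these pieces I obtain an integral inequality of the form
\[
   \|Av_2'(t)\|\leq Ct^{\alpha-1}\|u_0\|_2+C\sum_{j=1}^{m}\int_0^{t}\omega_{\alpha_j}(t-\sigma)\|Av'(\sigma)\|\,d\sigma,
\]
which, added to the $Ct^{\alpha-1}$ bound on $\|Av_1'(t)\|$ from Lemma~\ref{lemma2.3}, is a weakly singular Gr\"onwall inequality for $\|Av'(t)\|$. The Gr\"onwall lemma invoked at the end of Theorem~\ref{theorem2.4} then closes the estimate as $\|Av'(t)\|\leq Ct^{\alpha-1}\|u_0\|_2$. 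For the $v_2''$ estimate I would use $\|v_2''(t)\|\leq\|K'(t)\|+\|Av_2'(t)\|$, controlling the convolution piece of $K'$ through the Beta-integral $\int_0^{t}\omega_{\alpha_j}(t-s)s^{\alpha-1}\,ds\leq Ct^{\alpha-1}$. The chief obstacle is the self-referential character of the estimate---$\|Av'\|$ appears on both sides through $(\omega_{\alpha_j}*B_jv')$---so the weakly singular Gr\"onwall inequality is essential; a secondary subtlety is the $\kappa v'$ term, which must be rewritten via the original equation before either Lemma~\ref{lemma2.1} or Lemma~\ref{lemma2.2} applies, since the naive bound $\|\int_0^{t}AE(t-s)v'(s)\,ds\|$ using $\|AE(t-s)\|\leq C/(t-s)$ would produce a logarithmic divergence.
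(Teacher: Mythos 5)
Your overall route is the same as the paper's: differentiate \eqref{eq2.5}, represent $v_2'$ by Duhamel's principle, estimate the resulting terms with Lemmas \ref{lemma2.1}--\ref{lemma2.2} and the factorization $B_j=(B_jA^{-1})A$, close with a weakly singular Gr\"onwall inequality, and then read $\|v_2''\|$ off the differentiated equation. Two of your choices are in fact cleaner than the paper's: you take the initial value $v_2'(0)=-\kappa u_0$, which is what \eqref{eq2.5} actually gives (the paper asserts $v_2'(0)=u_0$), and you dispose of the $(\omega_{\alpha_j}*B_jv_1')$ contribution by feeding $\|Av_1'(\sigma)\|\le C\sigma^{\alpha-1}$ from Lemma \ref{lemma2.3} into the Gr\"onwall term, whereas the paper estimates this term ($R_{24}$) by hand through the auxiliary functions $G_j$, $H_j$ and a Beta integral.

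The one step that fails as written is your treatment of $\kappa v'$. After substituting $v'=g-Av-\sum_j(\omega_{\alpha_j}*B_jv)+\kappa v$ you must bound $\int_0^tAE(t-s)\,Av(s)\,ds$, and this is \emph{not} ``already controlled by Theorem \ref{theorem2.4} and Lemmas \ref{lemma2.1}--\ref{lemma2.2}'': Lemma \ref{lemma2.1} with $\varphi=Av$ requires $\|Av'(t)\|\le Ct^{\alpha-2}$, which is (a weakened form of) precisely the bound you are in the middle of proving, and Lemma \ref{lemma2.2} does not apply because the term is not a convolution. The circularity is avoidable: keep $v'=v_1'+v_2'$ split rather than substituting the equation. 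Lemma \ref{lemma2.1} applies directly to $\varphi=v_1'$, its hypotheses being supplied by Lemma \ref{lemma2.3}, while for the $v_2'$ piece you commute $A$ through $E(t-s)$ via \eqref{eq2.7} and absorb $\int_0^t\|Av_2'(s)\|\,ds\le T^{1-\alpha}\int_0^t(t-s)^{\alpha-1}\|Av_2'(s)\|\,ds$ into the Gr\"onwall kernel; this is essentially what the paper does in its term $R_{21}$. With that repair the rest of your argument, including the Beta-integral bound for $\|v_2''\|$ and the final appeal to Lemma \ref{lemma2.3} for the $v_1$ part, goes through.
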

\begin{proof}
  Differentiate \eqref{eq2.5} to get
   \begin{equation}\label{eq2.10}
   \begin{split}
       v_2''(t) + Av_2'(t) =  \kappa v'(t) - \sum\limits_{j=1}^{m}\omega_{\alpha_j}(t)B_ju_0  - \sum\limits_{j=1}^{m}  \int_0^t \omega_{\alpha_j}(s)B_j   \frac{\partial v(t-s)}{\partial t} ds,
   \end{split}
   \end{equation}
where using \eqref{eq2.4}-\eqref{eq2.5}, we have $v_2'(0)=v_1(0)=u_0$, thus Duhamel's principle gives
   \begin{equation}\label{eq2.11}
   \begin{split}
       v_2'(t) & = - \left( E(t)u_0 - \kappa \int_0^t E(t-s) \big( v_1'(s) + v_2'(s) \big)   \right)   \\
       & - \sum\limits_{j=1}^{m}\int_0^t E(t-s)\omega_{\alpha_j}(s) (B_jA^{-1}) Au_0  ds \\
       & - \sum\limits_{j=1}^{m}\int_0^t E(t-s) \int_0^s \omega_{\alpha_j}(s-\sigma) B_j v_2'(\sigma) d\sigma ds  \\
       & - \sum\limits_{j=1}^{m}\int_0^t E(t-s) \int_0^s \omega_{\alpha_j}(s-\sigma) B_j v_1'(\sigma) d\sigma ds  \\
       & := R_{21} + R_{22} + R_{23} + R_{24}.
   \end{split}
   \end{equation}
Then, each term of \eqref{eq2.11} will be estimated. First, Lemma \ref{lemma2.3} and \eqref{eq2.7} lead to
   \begin{equation}\label{eq2.12}
   \begin{split}
       \left\| R_{21} \right\|_2   & \leq  C\left\| Au_0\right\| +  C\int_0^t \left\|E(s) \big( v_1'(t-s) + v_2'(t-s) \big)\right\| ds \\
       & \leq  C\left( \left\| u_0\right\|_2 + \sum\limits_{j=1}^{m} t^{\alpha_j} +  \int_0^t  \left\| v_2'(s) \right\|ds  \right),
   \end{split}
   \end{equation}
besides, Lemmas \ref{lemma2.1}-\ref{lemma2.3} and assumption \eqref{eq1.2} give
   \begin{equation}\label{eq2.13}
   \begin{split}
      \left\| R_{22} \right\|_2   & \leq \sum\limits_{j=1}^{m} \left\| \int_0^t AE(t-s)\omega_{\alpha_j}(s) (B_jA^{-1}) Au_0  ds  \right\| \\
      & \leq C  \sum\limits_{j=1}^{m} t^{\alpha_j}  \left\| Au_0\right\| \leq C t^{\alpha-1}\left\| u_0\right\|_2,
   \end{split}
   \end{equation}
and
   \begin{equation}\label{eq2.14}
   \begin{split}
        \left\| R_{23} \right\|_2   & \leq \sum\limits_{j=1}^{m} \left\| \int_0^t AE(t-s) \int_0^s \omega_{\alpha_j}(s-\sigma) (B_jA^{-1}) Av_2'(\sigma)  ds  \right\| \\
      & \leq C  \sum\limits_{j=1}^{m}   \int_0^t \omega_{\alpha_j}(t-\sigma) \left\|  Av_2'(\sigma)  \right\|d\sigma
       \leq C \int_0^t \omega_{\alpha}(t-\sigma) \left\|  Av_2'(\sigma)  \right\|d\sigma.
   \end{split}
   \end{equation}
Then for the estimate of $\left\| R_{24} \right\|_2$, we first utilize \eqref{eq2.6} to obtain
   \begin{equation}\label{eq2.15}
   \begin{split}
       Av_1'(t) = E'(t)(Au_0 - g(0))  + \int_0^t AE(t-s)g'(s)ds.
   \end{split}
   \end{equation}
Hence, we have
   \begin{equation}\label{eq2.16}
   \begin{split}
      AR_{24} & = -\sum\limits_{j=1}^{m} \int_0^t AE(t-s)  (B_jA^{-1}) G_j(s)ds  \\
       & - \sum\limits_{j=1}^{m}\int_0^t AE(t-s) \int_0^s \omega_{\alpha_j}(s-\sigma) H_j(\sigma) d\sigma ds,
   \end{split}
   \end{equation}
from which, $H_j(t)=\int_0^t (B_jA^{-1})AE(t-\theta) g'(\theta)d\theta$ and
   \begin{equation}\label{eq2.17}
   \begin{split}
           G_j(t) = \int_0^t \omega_{\alpha_j}(s)AE(t-s)( Au_0 - g(0) )ds.
   \end{split}
   \end{equation}
Then, Lemma \ref{lemma2.1} can be applied to \eqref{eq2.16} by satisfying two conditions. Firstly, we have $\| G_j(t)\|\leq Ct^{\alpha_j-1}\|Au_0 - g(0)\|\leq Ct^{\alpha_j-1}$. Further we estimate the derivative of $G_j(t)$. By writing
   \begin{equation*}
   \begin{split}
           G_j(t) = \left(\int_0^{t/2}+\int_{t/2}^{t} \right) \omega_{\alpha_j}(s)AE(t-s)( Au_0 - g(0) )ds:= (G_1)_j(t) + (G_2)_j(t).
   \end{split}
   \end{equation*}
Next we employ $\frac{\partial [AE(t-s)]}{\partial t}=-A^2E(t-s)$ to get
   \begin{equation*}
   \begin{split}
          (G_1)_j'(t) = \omega_{\alpha_j}(t/2)AE(t/2)( Au_0 - g(0) ) - \int_0^{t/2} \omega_{\alpha_j}(s)A^2E(t-s)( Au_0 - g(0) )ds,
   \end{split}
   \end{equation*}
thus we have
   \begin{equation}\label{eq2.18}
   \begin{split}
          \left\|(G_1)_j'(t)\right\| = C t^{\alpha_j-2} \|Au_0 - g(0)\| \leq Ct^{\alpha_j-2}.
   \end{split}
   \end{equation}
Further, by applying $\frac{\partial E(t-s)}{\partial s}=AE(t-s)$ and integrating by parts, we obtain
   \begin{equation*}
   \begin{split}
           (G_2)_j(t) = \omega_{\alpha_j}(s)E(t-s)\Big|_{t/2}^{t} ( Au_0 - g(0) )ds - \int_{t/2}^{t} \omega_{\alpha_j}'(s)E(t-s)( Au_0 - g(0) )ds,
   \end{split}
   \end{equation*}
therefore \eqref{eq2.7} gives $\|(G_2)_j(t)\| \leq Ct^{\alpha_j-1}$. Then, Differentiating $(G_2)_j(t)$ and similar to the estimate of $\left\|(G_1)_j'(t)\right\|$, we can yield that
   \begin{equation}\label{eq2.19}
   \begin{split}
          \left\|(G_2)_j'(t)\right\| = C t^{\alpha_j-2} \|Au_0 - g(0)\| \leq Ct^{\alpha_j-2}.
   \end{split}
   \end{equation}
In view of \eqref{eq2.18} and \eqref{eq2.19}, we use Lemmas \ref{lemma2.1} and \ref{lemma2.2} to deduce
   \begin{equation}\label{eq2.20}
   \begin{split}
      \left\|R_{24}\right\|_2 & \leq \sum\limits_{j=1}^{m}t^{\alpha_j-1}
       + C \sum\limits_{j=1}^{m}\int_0^t (t-s)^{\alpha_j-1} \|H_j(s)\|  ds \\
       & \leq C \sum\limits_{j=1}^{m}t^{\alpha_j-1} + \sum\limits_{j=1}^{m}\int_0^t (t-s)^{\alpha_j-1}s^{\alpha-1}  ds \\
       & \leq C\left( t^{\alpha-1} + \sum_{j=1}^{m}t^{\alpha_j+\alpha-1} \right) \leq C t^{\alpha-1}.
   \end{split}
   \end{equation}
Using \eqref{eq2.11}-\eqref{eq2.14} and \eqref{eq2.20}, then
   \begin{equation}\label{eq2.21}
   \begin{split}
       \left\| Av_2'(t)\right\| \leq C \left( t^{\alpha-1}\left\| u_0\right\|_2 +  \int_0^t \omega_{\alpha}(t-\sigma) \left\|  Av_2'(\sigma)  \right\|d\sigma \right),
   \end{split}
   \end{equation}
from which using Gr\"{o}nwall's lemma (see \cite[Lemma 1]{Chen}), we get $\left\| Av_2'(t)\right\| \leq Ct^{\alpha-1}\left\| u_0\right\|_2$.
From \eqref{eq2.10} and above analyses, we have
   \begin{equation}\label{eq2.22}
   \begin{split}
       \left\| v_2''(t)\right\| & \leq C  \left\| Av_2'(t)\right\| + \kappa \left\| v'(t)\right\| +  \sum\limits_{j=1}^{m}\omega_{\alpha_j}(t)\|u_0\|_2 \\
       & +  \sum\limits_{j=1}^{m}\int_0^t \omega_{\alpha_j}(t-s) \|(B_jA^{-1})Av'(s)\| ds \\
       & \leq C t^{\alpha-1}\left\| u_0\right\|_2,
   \end{split}
   \end{equation}
which combines Lemma \ref{lemma2.3}, we finish the proof.
\end{proof}

\vskip 0.05in
According to the relation between $v(t)$ and $u(t)$, we give the following theorems.
\begin{theorem}\label{theorem2.6}
Under the assumptions of Theorem \ref{theorem2.4}. Then the solution of \eqref{eq1.1} satisfies that
   \begin{equation*}
   \begin{split}
            \|u'(t)\| + \|Au(t)\| \leq C e^{-\kappa t}\|u_0\|, \quad 0< t \leq T.
   \end{split}
   \end{equation*}
   \end{theorem}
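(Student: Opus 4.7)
The plan is to deduce Theorem \ref{theorem2.6} as a direct consequence of Theorem \ref{theorem2.4} by unwinding the exponential substitution \eqref{eq2.1}. Since $v(t) = e^{\kappa t}u(t)$, we immediately have
\begin{equation*}
u(t) = e^{-\kappa t} v(t), \qquad Au(t) = e^{-\kappa t} Av(t), \qquad u'(t) = e^{-\kappa t}\bigl(v'(t) - \kappa v(t)\bigr).
\end{equation*}
The bound on $Au(t)$ is then immediate: Theorem \ref{theorem2.4} gives $\|Av(t)\| \leq C\|u_0\|$, and multiplying by $e^{-\kappa t}$ yields $\|Au(t)\| \leq C e^{-\kappa t}\|u_0\|$.

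For $u'(t)$, the only non-trivial ingredient is a bound on $\|v(t)\|$ itself, which is not stated in Theorem \ref{theorem2.4} but follows from it at no cost. Since $v(0)=u_0$ and $\|v'(s)\|\leq C\|u_0\|$ uniformly for $s\in(0,T]$, integration gives
\begin{equation*}
\|v(t)\| \leq \|u_0\| + \int_0^t \|v'(s)\|\,ds \leq C(1+T)\|u_0\|, \qquad 0<t\leq T.
\end{equation*}
Combining this with $\|v'(t)\|\leq C\|u_0\|$ from Theorem \ref{theorem2.4} produces
\begin{equation*}
\|u'(t)\| \leq e^{-\kappa t}\bigl(\|v'(t)\| + \kappa\|v(t)\|\bigr) \leq C\bigl(1+\kappa(1+T)\bigr) e^{-\kappa t}\|u_0\|,
\end{equation*}
which absorbs into the generic constant $C$ (allowed to depend on $T$ and $\kappa$) to give the stated estimate.

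There is no real obstacle here; the theorem is essentially a cosmetic reformulation of Theorem \ref{theorem2.4} in the original variable. The only mild subtlety is remembering to control $\|v(t)\|$ (not just $\|v'(t)\|$ and $\|Av(t)\|$) in order to handle the cross term $\kappa v(t)$ that appears when differentiating $u = e^{-\kappa t}v$; this is handled in one line by the fundamental theorem of calculus.
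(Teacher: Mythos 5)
Your proof is correct and follows essentially the same route as the paper: unwind the substitution $v=e^{\kappa t}u$, use Theorem \ref{theorem2.4}, and absorb the cross term $\kappa v(t)$ via the triangle inequality. The only difference is that you explicitly supply the bound $\|v(t)\|\leq C(1+T)\|u_0\|$ by integrating $\|v'\|$, a step the paper leaves implicit.
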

\begin{proof} By Theorem \ref{theorem2.4} and $v(t)=e^{\kappa t}u(t)$, we have
   \begin{equation*}
   \begin{split}
            \|u'(t)+\kappa u(t)\| + \|Au(t)\| \leq C \|u_0\| e^{-\kappa t},
   \end{split}
   \end{equation*}
which follows from the triangle inequality that
   \begin{equation*}
   \begin{split}
            \|u'(t)\| + \|Au(t)\| \leq e^{-\kappa t} \Big(C\|u_0\|  + \kappa \|v(t)\| \Big),
   \end{split}
   \end{equation*}
which yields the desired result.
\end{proof}

\begin{theorem}\label{theorem2.7}
Under the assumptions of Theorem \ref{theorem2.5}, then for $0< t \leq T$, the solution of \eqref{eq1.1} satisfies
   \begin{equation*}
   \begin{split}
            \|u''(t)\| + \|Au'(t)\| \leq C e^{-\kappa t}t^{\alpha-1}\|u_0\|_2,  \quad \alpha \in \min\limits_{1\leq j\leq m}\{\alpha_j\}.
   \end{split}
   \end{equation*}
   \end{theorem}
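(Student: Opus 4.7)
The plan is to transfer the regularity bounds for $v(t)$ from Theorems~\ref{theorem2.4} and \ref{theorem2.5} back to $u(t)$ via the identity $u(t)=e^{-\kappa t}v(t)$. This is an elementary manipulation: the hard analytic work (inverting the semigroup, handling the tempered convolution, and applying Gr\"onwall) has already been carried out for $v$, so what remains is to differentiate the exponential prefactor and keep track of constants.

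First I would differentiate $u(t)=e^{-\kappa t}v(t)$ twice to obtain
\begin{equation*}
u'(t)=e^{-\kappa t}\bigl(v'(t)-\kappa v(t)\bigr),\qquad
u''(t)=e^{-\kappa t}\bigl(v''(t)-2\kappa v'(t)+\kappa^{2} v(t)\bigr),
\end{equation*}
and likewise $Au'(t)=e^{-\kappa t}\bigl(Av'(t)-\kappa Av(t)\bigr)$ by linearity of $A$. Applying the triangle inequality I get
\begin{equation*}
\|u''(t)\|+\|Au'(t)\|\le e^{-\kappa t}\Bigl(\|v''(t)\|+\|Av'(t)\|+2\kappa\|v'(t)\|+\kappa\|Av(t)\|+\kappa^{2}\|v(t)\|\Bigr).
\end{equation*}

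Next I would invoke Theorem~\ref{theorem2.5} to control the two leading terms by $Ct^{\alpha-1}\|u_0\|_2$, and Theorem~\ref{theorem2.4} to control $\|v'(t)\|$ and $\|Av(t)\|$ by $C\|u_0\|$. The only quantity not directly supplied is $\|v(t)\|$ itself; I would bound this by writing $v(t)=u_0+\int_{0}^{t}v'(s)\,ds$ and using Theorem~\ref{theorem2.4} to get $\|v(t)\|\le \|u_0\|+Ct\|u_0\|\le C\|u_0\|$ for $t\in(0,T]$. Since $\|u_0\|\le C\|u_0\|_2$ and, on the bounded interval $(0,T]$, one has $1\le T^{1-\alpha}\,t^{\alpha-1}$, every $O(1)$ contribution can be absorbed into a constant multiple of $t^{\alpha-1}\|u_0\|_2$, yielding the stated estimate.

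The main obstacle, such as it is, lies not in any new analysis but in verifying that the factor $\kappa\ge 0$ allowed in \eqref{eq1.3} (including the untempered case $\kappa=0$) does not spoil the argument: when $\kappa=0$ the terms involving $v(t)$ simply vanish, while for $\kappa>0$ the exponential $e^{-\kappa t}$ only helps. Hence the proof reduces to the routine bookkeeping above, and no new lemma is required.
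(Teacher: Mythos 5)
Your proposal is correct and follows essentially the same route as the paper: differentiate $u(t)=e^{-\kappa t}v(t)$, apply the triangle inequality, invoke Theorems~\ref{theorem2.4} and \ref{theorem2.5} for the $v$-bounds, and absorb the $O(1)$ contributions into $Ct^{\alpha-1}$ on the bounded interval $(0,T]$. The paper merely routes two of the lower-order terms through Theorem~\ref{theorem2.6} (i.e.\ through $\|Au(t)\|$ and $\|u'(t)\|$) rather than directly through Theorem~\ref{theorem2.4}, which is an immaterial difference.
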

\begin{proof} By Theorem \ref{theorem2.5}, Theorem \ref{theorem2.6} and the triangle inequality, we get
   \begin{equation*}
   \begin{split}
            \|Au'(t)\|  \leq  e^{-\kappa t}\|Av'(t)\| +  \kappa\|Au(t)\| \leq Ce^{-\kappa t}t^{\alpha-1}\|u_0\|_2,
   \end{split}
   \end{equation*}
and
   \begin{equation*}
   \begin{split}
            \|u''(t)\|   \leq e^{-\kappa t} \|v''(t)\| +  \kappa^2 e^{-\kappa t} \|v(t)\| + 2\kappa\|u'(t)\| \leq C e^{-\kappa t}t^{\alpha-1}\|u_0\|_2.
   \end{split}
   \end{equation*}
We complete the proof.
\end{proof}

\vskip 0.1in
Moreover, to show the regularity result \eqref{eq2.3}, we need to derive that $\|Av''(t)\|\leq Ct^{\alpha-2}$, which can be proved by the procedure that was utilized to present that $\|Av'(t)\|\leq Ct^{\alpha-1}$, based on certain changes and appropriate assumptions. These naturally deduce that $\|Au''(t)\|\leq Ce^{-\kappa t}t^{\alpha-2}$ from previous analyses.

\section{Establishment of numerical scheme} \label{sec3} Before formulating the time-discrete scheme, we first introduce some helpful notations.

  \subsection{Some notations}

    \vskip 0.2mm
  In this section, we shall give some symbols in order to construct our method. First, we present the temporal levels $0=t_0<t_1 < t_2 < \cdots$ and define that $k_n = t_n - t_{n-1}$ and $t_{n-1/2} = \frac{1}{2} (  t_{n} + t_{n-1} )$ for $n\geq 1$. Moreover, we define that $\mathcal{V}_k=\{\mathcal{V}^n| 0\leq n\leq N\}$ and that
 \begin{equation*}
 \begin{array}{ll}
    \delta_t \mathcal{V}^{n-\frac{1}{2}} =\frac{1}{k_n}(\mathcal{V}^n-\mathcal{V}^{n-1}), \quad   \mathcal{V}^{n-\frac{1}{2}}=\frac{1}{2}(\mathcal{V}^n+\mathcal{V}^{n-1}),  \quad  1\leq n \leq N,
 \end{array}
  \end{equation*}
 and we also define the piecewise constant approximation as follows
      \begin{equation}\label{eq3.1}
      \overline{\mathcal{V}}(t)=
     \begin{cases}
          \mathcal{V}^1, &   t_0 < t < t_1, \\
          \mathcal{V}^{n-1/2}, &   t_{n-1} < t < t_n, \quad n\geq 2.
     \end{cases}
     \end{equation}
Then in view of above results, for approximating the integral term given in \eqref{eq2.2}, we utilize the PI rule to denote the discrete fractional integral (see \cite{McLean})
     \begin{equation}\label{eq3.2}
     \begin{split}
           I^{(\alpha_j)}\mathcal{V}^{n-1/2} &= \frac{1}{k_n}\int_{t_{n-1}}^{t_n}I^{(\alpha_j)}\overline{\mathcal{V}}(t)dt = \frac{1}{k_n}\int_{t_{n-1}}^{t_n}\int_{0}^{t}\omega_{\alpha_j}(t-\vartheta)\overline{\mathcal{V}}(\vartheta)d\vartheta dt \\
           & = w_{n1}^{(j)}\mathcal{V}^1k_1 + \sum\limits_{p=2}^{n}w_{np}^{(j)} \mathcal{V}^{p-1/2}k_p, \quad 1\leq j \leq m,
     \end{split}
     \end{equation}
where
     \begin{equation}\label{eq3.3}
     \begin{split}
           w_{np}^{(j)} = \frac{1}{k_nk_p}\int_{t_{n-1}}^{t_n}\int_{t_{p-1}}^{\min\{t,t_p\}}\omega_{\alpha_j}(t-\vartheta)  d\vartheta dt >0.
     \end{split}
     \end{equation}
Further, for $1\leq p\leq n-1$, we obtain
     \begin{equation}\label{eq3.4}
     \begin{split}
           w_{np}^{(j)} = \frac{1}{k_nk_p}\int_{t_{n-1}}^{t_n}\int_{t_{p-1}}^{t_p}\omega_{\alpha_j}(t-\vartheta)  d\vartheta dt =  \frac{\lambda_{n,p}^{(j)}-\lambda_{n-1,p}^{(j)}}{k_n k_p \Gamma(\alpha_j+2)},
     \end{split}
     \end{equation}
from which, $\lambda_{n,p}^{(j)} =  (t_{n}-t_{p-1})^{\alpha_j+1} - (t_{n}-t_{p})^{\alpha_j+1}$, and for $n \geq 1$, then
     \begin{equation}\label{eq3.5}
     \begin{split}
           w_{nn}^{(j)} = \frac{1}{k_n^2}\int_{t_{n-1}}^{t_n}\int_{t_{n-1}}^{t}\omega_{\alpha_j}(t-\vartheta)  d\vartheta dt = \frac{k_n^{\alpha_j-1}}{\Gamma(\alpha_j+2)}.
     \end{split}
     \end{equation}

    \vskip 0.2mm
Besides, for the inhomogeneous term in \eqref{eq2.2}, we denote
     \begin{equation}\label{eq3.6}
     \begin{split}
           g^{n-\frac{1}{2}}\approx \frac{1}{k_n}\int_{t_{n-1}}^{t_n} g(t) dt, \qquad n\geq 1,
     \end{split}
     \end{equation}
from which, we suppose that
     \begin{equation}\label{eq3.7}
     \begin{split}
            \left\|  g^{1/2}k_1 - \int_{t_0}^{t_1} g(t)dt  \right\| \leq C \int_{t_0}^{t_1} t \|g'(t)\| dt,
     \end{split}
     \end{equation}
and that
     \begin{equation}\label{eq3.8}
     \begin{split}
            \left\|  g^{n-1/2}k_n - \int_{t_{n-1}}^{t_n} g(t)dt  \right\| \leq C k_n^2 \int_{t_{n-1}}^{t_n}  \|g''(t)\| dt, \quad n\geq 2.
     \end{split}
     \end{equation}
Also, e.g., $g^{n-\frac{1}{2}}= \frac{1}{2}\left[ g(t_{n-1}) + g(t_{n}) \right]$ or $g^{n-\frac{1}{2}}=g(t_{n-1/2})$ are permissible.

    \vskip 2mm
Then, to overcome the singular behaviour of the exact solution at $t=0$, we give some hypotheses of non-uniform meshes \cite{McLean} that
     \begin{equation}\label{eq3.9}
     \begin{split}
            k_n\leq C_{\gamma}k \min\left\{1,t_n^{1-1/\gamma}\right\}, \quad n\geq 1, \quad \gamma\geq 1,
     \end{split}
     \end{equation}
where $C_{\gamma}$ does not depend on $k$, and that
      \begin{equation}\label{eq3.10}
     \begin{split}
                 t_1=k_1\geq c_{\gamma}k^{\gamma}, \quad t_n \leq C_{\gamma}t_{n-1},    \quad n\geq 2,
     \end{split}
     \end{equation}
and that (a more rigid hypothesis)
      \begin{equation}\label{eq3.11}
     \begin{split}
          0\leq k_{n+1} - k_n\leq C_{\gamma}k^2 \min\left\{1,t_n^{1-2/\gamma}\right\}, \quad n\geq 2.
     \end{split}
     \end{equation}
Thence, for $0\leq t\leq T< \infty$, the case satisfying the hypotheses \eqref{eq3.9}-\eqref{eq3.11} is
      \begin{equation}\label{eq3.12}
     \begin{split}
          t_n = (nk)^{\gamma}, \quad  k= T^{1/\gamma}/N,   \quad 0\leq n \leq N.
     \end{split}
     \end{equation}

 \subsection{Time discrete scheme}

 First, we integrate \eqref{eq2.2} from $t=t_{n-1}$ to $t=t_n$ and multiply the term $\frac{1}{k_n}$, then
    \begin{equation}\label{eq3.13}
   \begin{split}
      \delta_tv^{n-\frac{1}{2}} + \frac{1}{k_n}\int_{t_{n-1}}^{t_n}  Av(t)dt &+  \frac{1}{k_n}\sum\limits_{j=1}^m\int_{t_{n-1}}^{t_n}(\omega_{\alpha_j}*B_jv)(t)dt \\
      & -  \frac{\kappa}{k_n}\int_{t_{n-1}}^{t_n}v(t)dt  = \frac{1}{k_n} \int_{t_{n-1}}^{t_n}g(t)dt,
   \end{split}
   \end{equation}
which follows from \eqref{eq3.1}-\eqref{eq3.8} that
    \begin{equation}\label{eq3.14}
   \begin{split}
      \delta_tv^{\frac{1}{2}} +  Av^{1} &+  \sum\limits_{j=1}^m I^{(\alpha_j)} B_jv^{\frac{1}{2}}
       -  \kappa v^{1}  = g^{\frac{1}{2}} + \mathcal{R}^{1/2},
   \end{split}
   \end{equation}
and for $n\geq 2$ that
    \begin{equation}\label{eq3.15}
   \begin{split}
      \delta_tv^{n-\frac{1}{2}} +  Av^{n-\frac{1}{2}} &+  \sum\limits_{j=1}^m I^{(\alpha_j)} B_jv^{n-\frac{1}{2}}
       -  \kappa v^{n-\frac{1}{2}}  = g^{n-\frac{1}{2}} + \mathcal{R}^{n-1/2},
   \end{split}
   \end{equation}
where $v^n=v(t_n)$, $v^{n-\frac{1}{2}}=\frac{v^n+v^{n-1}}{2}$ and $\mathcal{R}^{n-1/2}=\sum\limits_{s=1}^{4}\mathcal{R}_s^{n-1/2}$ for $n\geq 1$, and
    \begin{equation}\label{eq3.16}
   \begin{split}
      & \mathcal{R}_1^{1/2} =  Av^{1} - \frac{1}{k_1}\int_{t_{0}}^{t_1}  Av(t)dt, \\
      & \mathcal{R}_1^{n-1/2} =  Av^{n-\frac{1}{2}} - \frac{1}{k_n}\int_{t_{n-1}}^{t_n}  Av(t)dt, \quad n\geq 2, \\
      & \mathcal{R}_2^{n-1/2} = \sum\limits_{j=1}^m  \left( I^{(\alpha_j)} B_jv^{n-\frac{1}{2}} - \frac{1}{k_n}\int_{t_{n-1}}^{t_n}(\omega_{\alpha_j}*B_jv)(t)dt \right),  \quad n,m \geq 1, \\
      & \mathcal{R}_3^{1/2} = -  \kappa\left( v^{1} - \frac{1}{k_1}\int_{t_{0}}^{t_1}  v(t)dt\right), \\
      & \mathcal{R}_3^{n-1/2} = -  \kappa\left( v^{n-\frac{1}{2}} - \frac{1}{k_n}\int_{t_{n-1}}^{t_n}  v(t)dt\right), \quad n\geq 2, \\
      & \mathcal{R}_4^{n-1/2} = g^{n-\frac{1}{2}} - \frac{1}{k_n}\int_{t_{n-1}}^{t_n}  g(t)dt, \quad n\geq 1.
   \end{split}
   \end{equation}
Then omitting the truncation errors $\mathcal{R}^{n-1/2}$ and replacing $v^n$ with its numerical approximation $V^n$, then time discrete scheme is yielded as follows
    \begin{equation}\label{eq3.17}
   \begin{split}
      \delta_tV^{\frac{1}{2}} +  AV^{1} &+  \sum\limits_{j=1}^m I^{(\alpha_j)} B_jV^{\frac{1}{2}}
       -  \kappa V^{1}  = g^{\frac{1}{2}},
   \end{split}
   \end{equation}
    \begin{equation}\label{eq3.18}
   \begin{split}
      \delta_tV^{n-\frac{1}{2}} +  AV^{n-\frac{1}{2}} &+  \sum\limits_{j=1}^m I^{(\alpha_j)} B_jV^{n-\frac{1}{2}}
       -  \kappa V^{n-\frac{1}{2}}  = g^{n-\frac{1}{2}}, \quad 2\leq n \leq N,
   \end{split}
   \end{equation}
    \begin{equation}\label{eq3.19}
   \begin{split}
      V^0 = u_0.
   \end{split}
   \end{equation}

\section{Analysis of time discrete scheme} \label{sec4} In this section, we shall present the stability and convergence of the proposed time-discrete scheme \eqref{eq3.17}-\eqref{eq3.19}.

\subsection{Stability} Here, we first establish the stability result of the numerical scheme. Then the following theorem is obtained.

\begin{theorem}\label{theorem4.1} Assume that the numerical solution $V^n$ is denoted by \eqref{eq3.17} and \eqref{eq3.18} for $1\leq n \leq N$. Let $U^n$ be the approximate solution of $u(t_n)$ given in \eqref{eq1.1}. Then for $T<\infty$ and $0\leq \kappa <\infty$, it holds that
   \begin{equation*}
   \begin{split}
      \max\limits_{1\leq n \leq N} \|V^n\| \leq C(T) \left( \|V^0\| + \sum\limits_{n=1}^{N} k_n \left\|g^{n-1/2}\right\| \right),
   \end{split}
   \end{equation*}
and that
   \begin{equation*}
   \begin{split}
      \max\limits_{1\leq n \leq N} \|U^n\| \leq C(T) \left( \|U^0\| + \sum\limits_{n=1}^{N} k_n e^{\kappa t_{n-1/2}} \left\|f^{n-1/2}\right\| \right).
   \end{split}
   \end{equation*}
\end{theorem}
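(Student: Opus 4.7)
The plan is a standard discrete $L^2$ energy argument: test the scheme against the Crank--Nicolson average of $V$, use the self-adjoint positive-definiteness of $A$ to generate a coercive quantity $\|V^{n-1/2}\|_1^2$, control the memory convolution by the dominance hypothesis \eqref{eq1.2}, and close via a discrete Gr\"onwall inequality. The bound for $U^n$ is then recovered from the bound for $V^n$ by undoing the exponential tempering \eqref{eq2.1}.

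Concretely, I would first take the inner product of \eqref{eq3.17} with $V^1$ and of \eqref{eq3.18} with $V^{n-1/2}$ for $n\ge 2$. The time difference yields the telescoping identity $2k_n(\delta_tV^{n-1/2},V^{n-1/2})=\|V^n\|^2-\|V^{n-1}\|^2$ (with the analogous one-sided inequality for $n=1$ via Cauchy--Schwarz on $(V^1-V^0,V^1)$), the diffusion term produces the coercive $2k_n\|V^{n-1/2}\|_1^2\ge 0$ by self-adjoint positivity of $A$, and the zeroth-order terms $-2k_n\kappa\|V^{n-1/2}\|^2$ and $2k_n(g^{n-1/2},V^{n-1/2})$ are handled by Cauchy--Schwarz together with $\|V^{n-1/2}\|\le\tfrac12(\|V^n\|+\|V^{n-1}\|)$.

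Second, I would estimate the memory contribution by expanding \eqref{eq3.2} and applying \eqref{eq1.2} with $q=\vartheta=1$:
\[
  \bigl|\bigl(I^{(\alpha_j)}B_jV^{n-1/2},V^{n-1/2}\bigr)\bigr|
     \le C\sum_{p=1}^n k_p\, w_{np}^{(j)}\,\|V^{p-1/2}\|_1\,\|V^{n-1/2}\|_1.
\]
The positive PI weights from \eqref{eq3.3}--\eqref{eq3.5} have uniformly bounded total mass $\sum_{p=1}^n k_p w_{np}^{(j)}=\tfrac{1}{k_n}\int_{t_{n-1}}^{t_n}\!\int_0^t\omega_{\alpha_j}(t-s)\,ds\,dt\le CT^{\alpha_j}$. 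A weighted Young inequality splits the above into a piece $\epsilon\|V^{n-1/2}\|_1^2$, absorbed into the coercive LHS after multiplying by $2k_n$ and summing $n=1,\dots,M\le N$, and a nonnegative discrete convolution in $\|V^{p-1/2}\|_1^2$. Exchanging the order of summation (and using the symmetric bound $k_p\sum_{n\ge p}k_n w_{np}^{(j)}\le C(T-t_{p-1})^{\alpha_j}$) recasts the residual into a form to which the discrete Gr\"onwall lemma \cite[Lemma 1]{Chen} applies, delivering $\|V^M\|\le C(T)\bigl(\|V^0\|+\sum_{n=1}^M k_n\|g^{n-1/2}\|\bigr)$ uniformly in the step sizes. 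Finally, since the transformation \eqref{eq2.1} translates into $V^n=e^{\kappa t_n}U^n$ and $g^{n-1/2}=e^{\kappa t_{n-1/2}}f^{n-1/2}$ at the discrete level, substitution together with the elementary bound $e^{-\kappa t_n}\le 1$ yields the second inequality.

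The main obstacle is the memory-absorption step: the leftover residual after Young's inequality is of the same order as the coercive LHS when $T^\alpha$ is not small, so a naive absorption argument fails. The Young parameter must be chosen to depend on the total kernel mass $CT^{\alpha}$, and the remainder then has to be reorganised as a genuine discrete convolution (with uniformly bounded kernel mass) so that Gr\"onwall closes the estimate uniformly in $N$. It is precisely the positivity of the weights $w_{np}^{(j)}>0$ and the explicit bounds in \eqref{eq3.3}--\eqref{eq3.5} that make this manipulation possible; without them the $\|V^{p-1/2}\|_1^2$ residue would grow beyond what Gr\"onwall can control.
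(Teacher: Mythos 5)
Your overall energy framework (testing with $V^{1}$ and $V^{n-1/2}$, the telescoping identity, the max-index/Gr\"onwall closure for the $\kappa$-term, and undoing the tempering via \eqref{eq2.1}) matches the paper's proof of Theorem \ref{theorem4.1}. The divergence, and the gap, is in how you treat the memory sum. The paper does not estimate it at all: it adds the hypothesis that each $B_j$ is positive definite and invokes the positivity of the piecewise-constant PI quadrature for the completely monotone kernel $\omega_{\alpha_j}$ (\cite[Lemma 3.1]{Mustapha}) to conclude
\begin{equation*}
k_1 \bigl(I^{(\alpha_j)}B_jV^{\frac{1}{2}}, V^{1}\bigr) + \sum_{n=2}^{N} k_n \bigl(I^{(\alpha_j)}B_jV^{n-\frac{1}{2}}, V^{n-\frac{1}{2}}\bigr) \geq 0,
\end{equation*}
so the whole term is simply discarded from the left-hand side, exactly as the $A$-term is. Your proposal never uses this sign structure and instead bounds the term in absolute value via \eqref{eq1.2} with $q=\vartheta=1$.

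That route does not close as described. After Young's inequality the residue is $C\sum_{p}k_p\bigl(\sum_{n\geq p}k_n w_{np}^{(j)}\bigr)\|V^{p-1/2}\|_1^2\leq CT^{\alpha_j}\sum_{p}k_p\|V^{p-1/2}\|_1^2$, which is $O(T^{\alpha_j})$ times the very dissipation term $\sum_n k_n\|V^{n-1/2}\|_1^2$ you are trying to absorb it into; absorption therefore works only when $CT^{\alpha_j}<1$, not for general $T$ (take $\|V^{n-1/2}\|_1$ roughly constant in $n$ to see the two sides are genuinely comparable). Nor can the weakly singular discrete Gr\"onwall lemma of \cite[Lemma 1]{Chen} rescue it: that lemma applies to a \emph{pointwise} controlled quantity $y_n\leq b+C\sum_{p<n}k_p\omega_\alpha(t_n-t_p)y_p$, whereas the energy identity controls $\|V^{n-1/2}\|_1^2$ only in time-integrated form (pointwise you control only $\|V^n\|$, which does not dominate $\|V^{n-1/2}\|_1$ for unbounded $A$). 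One could conceivably salvage your strategy by a continuation argument over subintervals of fixed length $T_0$ with $CT_0^{\alpha}\leq\tfrac12$, treating the memory tail as a forcing bounded by the estimate already obtained on earlier subintervals, but that is a different (and more laborious) argument than the reorganised-convolution-plus-Gr\"onwall step you propose. The missing ingredient relative to the paper is the positive-definiteness of $B_j$ together with the quadrature-positivity lemma, which is what lets the memory term be dropped with no smallness condition on $T$.
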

\begin{proof}
  By taking the inner product of \eqref{eq3.17}-\eqref{eq3.18} with $k_1V^{1}$ and $k_nV^{n-\frac{1}{2}}$ correspondingly, and summing \eqref{eq3.8} for $n$ from 2 to $N$, we obtain that
   \begin{equation}\label{eq4.1}
   \begin{split}
      k_1(\delta_tV^{\frac{1}{2}}, V^{1})  & + k_1 (AV^{1}, V^{1}) + k_1 \sum\limits_{j=1}^{m} \left(I^{(\alpha_j)}B_jV^{1/2}, V^{1}\right) \\
      & - \kappa k_1 (V^1,V^{1})  = k_1\left(g^{\frac{1}{2}},V^{1}\right),
   \end{split}
   \end{equation}
  and that
   \begin{equation}\label{eq4.2}
   \begin{split}
     & \sum\limits_{n=2}^{N}k_n(\delta_tV^{n-\frac{1}{2}}, V^{n-\frac{1}{2}})   + \sum\limits_{n=2}^{N}k_n (AV^{n-\frac{1}{2}}, V^{n-\frac{1}{2}})- \kappa \sum\limits_{n=2}^{N}k_n (V^{n-\frac{1}{2}},V^{n-\frac{1}{2}})  \\
      &  +  \sum\limits_{j=1}^{m}\sum\limits_{n=2}^{N} k_n \left(I^{(\alpha_j)}B_jV^{n-1/2}, V^{n-\frac{1}{2}}\right) = \sum\limits_{n=2}^{N}k_n \left(g^{n-\frac{1}{2}},V^{n-\frac{1}{2}}\right).
   \end{split}
   \end{equation}
Adding above two formulae and applying the positiveness of operator $A$, we have
   \begin{equation}\label{eq4.3}
   \begin{split}
       k_1(\delta_tV^{\frac{1}{2}}, V^{1}) & + \sum\limits_{n=2}^{N}k_n(\delta_tV^{n-\frac{1}{2}}, V^{n-\frac{1}{2}})  - \kappa \left( k_1 \|V^1\|^2 + \sum\limits_{n=2}^{N}k_n \|V^{n-\frac{1}{2}}\|^2  \right)  \\
      &   +  \sum\limits_{j=1}^{m} \left\{k_1 \left(I^{(\alpha_j)}B_jV^{\frac{1}{2}}, V^{1}\right) + \sum\limits_{n=2}^{N} k_n \left(I^{(\alpha_j)}B_jV^{n-\frac{1}{2}}, V^{n-\frac{1}{2}}\right) \right\} \\
      & =  k_1\left(g^{\frac{1}{2}},V^{1}\right) + \sum\limits_{n=2}^{N}k_n \left(g^{n-\frac{1}{2}},V^{n-\frac{1}{2}}\right).
   \end{split}
   \end{equation}
Let $B_j$ be the positive-definite operator and then from \cite[Lemma 3.1]{Mustapha}, we get
   \begin{equation}\label{eq4.4}
   \begin{split}
       k_1 \left(I^{(\alpha_j)}B_jV^{\frac{1}{2}}, V^{1}\right) + \sum\limits_{n=2}^{N} k_n \left(I^{(\alpha_j)}B_jV^{n-\frac{1}{2}}, V^{n-\frac{1}{2}}\right)
       \geq 0.
   \end{split}
   \end{equation}
Therefore, using the Cauchy-Schwarz inequality, then \eqref{eq4.3} turns into
   \begin{equation}\label{eq4.5}
   \begin{split}
       k_1\left(\delta_tV^{\frac{1}{2}}, V^{1}\right) & + \sum\limits_{n=2}^{N}k_n\left(\delta_tV^{n-\frac{1}{2}}, V^{n-\frac{1}{2}}\right)  - \kappa \left( k_1 \|V^1\|^2 + \sum\limits_{n=2}^{N}k_n \|V^{n-\frac{1}{2}}\|^2  \right)  \\
      & \leq  k_1 \left\|g^{\frac{1}{2}}\right\| \left\|V^{1}\right\| + \sum\limits_{n=2}^{N}k_n \left\|g^{n-\frac{1}{2}}\right\| \left\|V^{n-\frac{1}{2}}\right\|.
   \end{split}
   \end{equation}
Then it is easy to yield
   \begin{equation}\label{eq4.6}
   \begin{split}
       \left(\delta_tV^{\frac{1}{2}}, V^{1}\right)\geq \frac{\left\|V^{1}\right\|^2- \left\|V^{0}\right\|^2}{2k_1},  \quad \left(\delta_tV^{n-\frac{1}{2}}, V^{n-\frac{1}{2}}\right) =\frac{\left\|V^{n}\right\|^2- \left\|V^{n-1}\right\|^2}{2k_n}.
   \end{split}
   \end{equation}
Thus, we further obtain
   \begin{equation}\label{eq4.7}
   \begin{split}
      \frac{\left\|V^{N}\right\|^2- \left\|V^{0}\right\|^2}{2} & \leq k_1 \left( \kappa\left\|V^{1}\right\| + \left\|g^{1/2}\right\| \right)
       \left\|V^{1}\right\| \\
       & + \sum\limits_{n=2}^{N}k_n \left( \kappa \left\|V^{n-\frac{1}{2}}\right\| + \left\|g^{n-\frac{1}{2}}\right\| \right) \left\|V^{n-\frac{1}{2}}\right\|.
   \end{split}
   \end{equation}
Next, choosing a suitable $\mathcal{M}$ such that $\left\|V^{\mathcal{M}}\right\|=\max\limits_{0\leq n \leq N} \left\|V^{n}\right\|$, then
   \begin{equation}\label{eq4.8}
   \begin{split}
      \left\|V^{\mathcal{M}}\right\|^2  & \leq  \left\|V^{0}\right\|^2 + 2k_1 \left( \kappa\left\|V^{1}\right\| + \left\|g^{1/2}\right\| \right)
       \left\|V^{1}\right\| \\
       & + 2\sum\limits_{n=2}^{\mathcal{M}}k_n \left( \kappa \left\|V^{n-\frac{1}{2}}\right\| + \left\|g^{n-\frac{1}{2}}\right\| \right) \left\|V^{n-\frac{1}{2}}\right\| \\
       & \leq  \left\|V^{0}\right\|\left\|V^{\mathcal{M}}\right\| + 2k_1 \left( \kappa\left\|V^{1}\right\| + \left\|g^{1/2}\right\| \right)
      \left\|V^{\mathcal{M}}\right\| \\
       & + 2\sum\limits_{n=2}^{\mathcal{M}}k_n \left( \kappa \left\|V^{n-\frac{1}{2}}\right\| + \left\|g^{n-\frac{1}{2}}\right\| \right) \left\|V^{\mathcal{M}}\right\|.
   \end{split}
   \end{equation}
Consequently,
   \begin{equation}\label{eq4.9}
   \begin{split}
      \left\|V^{N}\right\| \leq \left\|V^{\mathcal{M}}\right\|
       & \leq  \left\|V^{0}\right\| + 2k_1 \left( \kappa\left\|V^{1}\right\| + \left\|g^{1/2}\right\| \right)
       \\
       & + 2\sum\limits_{n=2}^{N}k_n \left( \kappa \left\|V^{n-\frac{1}{2}}\right\| + \left\|g^{n-\frac{1}{2}}\right\| \right) \\
       & \leq  \kappa k_N \left\|V^{N}\right\| + 2\kappa \sum\limits_{n=1}^{N-1}( k_n+k_{n+1} ) \left\|V^{n}\right\| \\
       & + \left( \left\|V^{0}\right\| + 2\sum\limits_{n=1}^{N}k_n \left\|g^{n-\frac{1}{2}}\right\|  \right).
   \end{split}
   \end{equation}
Then an application of Gr\"{o}nwall's lemma (see \cite{Sloan}) yields
   \begin{equation}\label{eq4.10}
   \begin{split}
      \left\|V^{N}\right\| & \leq C \sum\limits_{n=1}^{N-1}\left( k_n+k_{n+1} \right) \left( \left\|V^{0}\right\| + 2\sum\limits_{n=1}^{N}k_n \left\|g^{n-\frac{1}{2}}\right\|  \right) \\
      & \leq C(T) \left( \left\|V^{0}\right\| + \sum\limits_{n=1}^{N}k_n \left\|g^{n-\frac{1}{2}}\right\|  \right),
   \end{split}
   \end{equation}
which can finish the proof by using \eqref{eq2.1}.
\end{proof}

\subsection{Convergence} Based on above analyses, we shall deduce the convergence of the scheme by the energy argument. We first define
   \begin{equation}\label{eq4.11}
   \begin{split}
         \rho^n = v(t_n) - V^n,  \quad  e^n = u(t_n) - U^n,  \quad 0 \leq n \leq N.
   \end{split}
   \end{equation}
 Then we subtract \eqref{eq3.17}-\eqref{eq3.18} from \eqref{eq3.14}-\eqref{eq3.15} to get error equations as follows
    \begin{equation}\label{eq4.12}
   \begin{split}
      \delta_t\rho^{\frac{1}{2}} +  A\rho^{1} &+  \sum\limits_{j=1}^m I^{(\alpha_j)} B_j\rho^{\frac{1}{2}}
       -  \kappa \rho^{1}  = \mathcal{R}^{\frac{1}{2}},
   \end{split}
   \end{equation}
    \begin{equation}\label{eq4.13}
   \begin{split}
      \delta_t\rho^{n-\frac{1}{2}} +  A\rho^{n-\frac{1}{2}} &+  \sum\limits_{j=1}^m I^{(\alpha_j)} B_j\rho^{n-\frac{1}{2}}
       -  \kappa \rho^{n-\frac{1}{2}}  = \mathcal{R}^{n-\frac{1}{2}}
   \end{split}
   \end{equation}
with $2\leq n \leq N$ and $\rho^0=0$, where $\mathcal{R}^{n-\frac{1}{2}}$ is given in \eqref{eq3.16}.

\vskip 2mm
In order to further derive the convergence, we shall introduce several auxiliary lemmas. At first, from \cite[Corollary 3.4]{McLean}, we can yield the following two lemmas.

\begin{lemma}\label{lemma4.3} Supposing that $t_n$ satisfies the assumptions \eqref{eq3.9}-\eqref{eq3.11} and the exact solution satisfies the regularity condition \eqref{eq2.3}, then for $1\leq m<\infty$, $\gamma\geq 1$ and $\alpha=\min\limits_{1 \leq j \leq m}\{\alpha_j\}$, we have
      \begin{equation*}
     \begin{split}
       \sum\limits_{n=1}^N k_n \left\|\mathcal{R}_2^{n-\frac{1}{2}}\right\| \leq C_{\alpha,\gamma,T,m} \times \Xi(k,\alpha,\gamma,T),
     \end{split}
     \end{equation*}
where
      \begin{equation*}
     \begin{split}
       \Xi(k,\alpha,\gamma,T):=
        \begin{cases}
          k^{\gamma(\alpha+1)}, & \mbox{if } 1\leq \gamma < 2/(\alpha+1), \\
          k^{2}\log(t_N/t_1), & \mbox{if }\gamma = 2/(\alpha+1), \\
          k^2, & \mbox{if } \gamma > 2/(\alpha+1).
        \end{cases}
     \end{split}
     \end{equation*}
\end{lemma}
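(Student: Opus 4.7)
The plan is to first expand the truncation error using the definition \eqref{eq3.2} of the PI rule. Writing out $I^{(\alpha_j)} B_j v^{n-1/2}$ gives
\[
\mathcal{R}_2^{n-\frac{1}{2}} = \frac{1}{k_n}\sum_{j=1}^m \int_{t_{n-1}}^{t_n}\int_0^t \omega_{\alpha_j}(t-s)\,B_j\bigl[\bar{v}(s)-v(s)\bigr]\,ds\,dt,
\]
where $\bar{v}$ is the piecewise constant interpolant defined by \eqref{eq3.1}. Using the domination inequality \eqref{eq1.2} with $\vartheta=0,\ q=2$, I would bound $\|B_j w\|\le C\|Aw\|$ and so reduce the problem to estimating the weighted $L^1$-norm of $\|A\bar{v}(s) - Av(s)\|$.

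Next, I would split the $s$-integration into the initial subinterval $[t_0,t_1]$ and the interior ones $[t_{p-1},t_p]$ with $p\ge 2$, and bound $\|A(\bar v(s)-v(s))\|$ on each using the regularity estimates in \eqref{eq2.3}. On $[t_0,t_1]$, since $\bar v\equiv v(t_1)$, the identity $Av(t_1)-Av(s)=\int_s^{t_1} Av'(r)\,dr$ together with $\|Av'(r)\|\le Cr^{\alpha-1}$ gives $\|A\bar v(s)-Av(s)\|\le C t_1^{\alpha}$. On an interior subinterval, where $\bar v(s)=\tfrac12(v(t_p)+v(t_{p-1}))$, the midpoint/trapezoidal-type identity
\[
\frac{v(t_p)+v(t_{p-1})}{2}-v(s) = \tfrac{1}{2}\!\int_{t_{p-1}}^{s}(r-t_{p-1})v''(r)\,dr - \tfrac{1}{2}\!\int_{s}^{t_p}(t_p-r)v''(r)\,dr
\]
together with $\|Av''(r)\|\le C r^{\alpha-2}$ gives a bound proportional to $k_p\int_{t_{p-1}}^{t_p}\|Av''(r)\|\,dr$. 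I would then swap the order of summation, using $\sum_{n=1}^N \int_{t_{n-1}}^{t_n}(\cdot)dt = \int_0^{t_N}(\cdot)dt$, extract the weight $\int_s^{t_N}\omega_{\alpha_j}(t-s)\,dt\le C T^{\alpha_j}$ (or, more sharply, $C(t_N-s)^{\alpha_j}$) by Fubini, and collect the local contributions.

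What remains is the bookkeeping over the graded mesh. The initial subinterval contributes an amount proportional to $k_1\cdot t_1^{\alpha}\asymp k_1^{1+\alpha}\asymp k^{\gamma(\alpha+1)}$, by the lower bound $t_1=k_1\ge c_\gamma k^\gamma$ in \eqref{eq3.10}. The interior contribution reduces, after inserting $\|Av''(r)\|\le C r^{\alpha-2}$ and using \eqref{eq3.9}--\eqref{eq3.11}, to a sum that behaves like $k^2\sum_{p\ge 2}p^{\gamma(\alpha+1)-2}$ up to constants. The main obstacle is precisely the case analysis for this last sum: the exponent $\gamma(\alpha+1)-2$ is negative, zero, or positive according as $\gamma$ is less than, equal to, or greater than $2/(\alpha+1)$, which produces the three regimes of $\Xi(k,\alpha,\gamma,T)$. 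In the first regime the initial-interval term $k^{\gamma(\alpha+1)}$ dominates; in the borderline regime a logarithm $\log(t_N/t_1)$ is picked up; in the last regime the interior sum is bounded uniformly in $N$ and the full second-order rate $k^2$ survives. To avoid grinding through all three regimes by hand, I would invoke the already-calibrated bound of McLean \cite[Corollary 3.4]{McLean}, applied individually to each of the $m$ convolution terms and combined with the constant $C_{\alpha,\gamma,T,m}$ absorbing the $\sum_{j=1}^m T^{\alpha_j}$ factor.
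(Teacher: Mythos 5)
Your proposal lands in essentially the same place as the paper: the paper's entire proof of this lemma is the single citation to \cite[Corollary 3.4]{McLean}, and your final step is to invoke exactly that calibrated bound (applied to each of the $m$ kernels), so the two approaches coincide. One caution about your reconstruction of the internals: the interior-interval identity you write for $\tfrac12\bigl(v(t_p)+v(t_{p-1})\bigr)-v(s)$ omits the first-order term $(t_{p-1/2}-s)v'(s)$, which does not vanish pointwise and, because of the singular weight $\omega_{\alpha_j}(t-s)$, does not cancel exactly upon integration either; controlling it (via $\int_{t_{p-1}}^{t_p}(s-t_{p-1/2})\,ds=0$ together with the slow variation of the weight on subintervals away from $s=t$) is precisely the delicate point that \cite[Corollary 3.4]{McLean} handles, so your deferral to that result is doing real work there and the sketch should not be read as a self-contained pointwise bound.
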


\begin{lemma}\label{lemma4.4} Assuming that $t_n$ satisfies the assumptions \eqref{eq3.9}-\eqref{eq3.11} and the modified source term satisfies $t\|g'(t)\|+t^2\|g''(t)\|\leq Ct^{\alpha}$, then for $\alpha=\min\limits_{1 \leq j \leq m}\{\alpha_j\}$ and $\gamma\geq 1$, it holds that
      \begin{equation*}
     \begin{split}
       \sum\limits_{n=1}^N k_n \left\|\mathcal{R}_4^{n-\frac{1}{2}}\right\| \leq C_{\alpha,\gamma} \times
        \Xi(k,\alpha,\gamma,T).
     \end{split}
     \end{equation*}
\end{lemma}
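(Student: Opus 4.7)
My plan is to isolate the $n=1$ term and treat the remaining sum via the two quadrature-error estimates \eqref{eq3.7}--\eqref{eq3.8}, combined with the pointwise regularity $t\|g'(t)\|+t^2\|g''(t)\|\le Ct^\alpha$ and the graded-mesh assumptions \eqref{eq3.9}--\eqref{eq3.11}. The key observation is that $k_n\|\mathcal{R}_4^{n-1/2}\|$ equals the absolute quadrature error on $[t_{n-1},t_n]$, so the quantity I must control is a sum of interval-wise quadrature errors weighted by the derivative bounds on $g$.

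For the starting term, \eqref{eq3.7} together with $t\|g'(t)\|\le Ct^\alpha$ gives $k_1\|\mathcal{R}_4^{1/2}\|\le Ct_1^{\alpha+1}$. Using \eqref{eq3.9} at $n=1$ together with \eqref{eq3.10} to obtain the two-sided comparison $t_1\sim k^{\gamma}$, this boundary contribution is $O(k^{\gamma(\alpha+1)})$, which is dominated by $\Xi(k,\alpha,\gamma,T)$ in each of the three regimes of $\gamma$. For $n\ge 2$, I apply \eqref{eq3.8} and the bound $\|g''(t)\|\le Ct^{\alpha-2}$, together with the pointwise mesh weight $k_n\le C_{\gamma}k\,t^{1-1/\gamma}$ valid uniformly for $t\in[t_{n-1},t_n]$ (obtained from \eqref{eq3.9} at $t_n$ and the comparability $t_n\le C_{\gamma}t_{n-1}$ from \eqref{eq3.10}). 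Absorbing one factor of $k_n$ into the $t$-integral converts the sum into a single integral of the form
\[
\sum_{n=2}^{N}k_n\|\mathcal{R}_4^{n-1/2}\|\le Ck^{2}\int_{t_1}^{t_N}t^{\,\alpha-2/\gamma}\,dt.
\]

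The crux is a case analysis on the sign of $\alpha+1-2/\gamma$. If $\gamma>2/(\alpha+1)$, the exponent $\alpha-2/\gamma$ exceeds $-1$ and the integral is bounded by a $T$-dependent constant, yielding $O(k^{2})$. If $\gamma=2/(\alpha+1)$, the integrand is $t^{-1}$ and the integral equals $\log(t_N/t_1)$, yielding $O(k^{2}\log(t_N/t_1))$. If $\gamma<2/(\alpha+1)$, the integral is dominated by $t_1^{\alpha+1-2/\gamma}\sim k^{\gamma(\alpha+1)-2}$, so that multiplication by $k^{2}$ restores $O(k^{\gamma(\alpha+1)})$. Each case precisely matches the definition of $\Xi(k,\alpha,\gamma,T)$, and combining with the $n=1$ bound closes the argument. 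I expect the only delicate point to be the uniform derivation of the pointwise mesh weight $k_n\le C_{\gamma}k\,t^{1-1/\gamma}$ on each subinterval; once that is in hand, the proof reduces to careful exponent bookkeeping across the three regimes of $\gamma$.
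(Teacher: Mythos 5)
Your argument is correct and is exactly the standard computation: the paper itself does not write out a proof of this lemma (it is delegated to \cite[Corollary 3.4]{McLean}), but your splitting into the $n=1$ term via \eqref{eq3.7} and the $n\geq 2$ terms via \eqref{eq3.8}, followed by conversion of $k_n^2$ into $Ck^2t^{2-2/\gamma}$ using \eqref{eq3.9}--\eqref{eq3.10} and the three-regime integral $\int_{t_1}^{t_N}t^{\alpha-2/\gamma}\,dt$, mirrors verbatim the paper's own proof of the companion Lemma \ref{lemma4.5} (cf.\ \eqref{eq4.18}). The only cosmetic slip is the phrase ``absorbing one factor of $k_n$'': your displayed bound $Ck^2\int_{t_1}^{t_N}t^{\alpha-2/\gamma}\,dt$ requires converting both factors of $k_n$ (or one factor plus the interval length), but the estimate as displayed is the right one and the exponent bookkeeping in all three cases is correct.
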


Further, we will estimate the remaining error terms in \eqref{eq3.16} based on certain reasonable conditions. Firstly, we give the following result.
\begin{lemma}\label{lemma4.5} Assume that assumptions \eqref{eq3.9}-\eqref{eq3.11} hold and the exact solution satisfies the regularity \eqref{eq2.3}. Then for $\gamma\geq 1$ and $\alpha=\min\limits_{1 \leq j \leq m}\{\alpha_j\}$, it holds that
      \begin{equation*}
     \begin{split}
       \sum\limits_{n=1}^N k_n \left\|\mathcal{R}_1^{n-\frac{1}{2}}\right\| \leq C_{\alpha,\gamma} \times \Xi(k,\alpha,\gamma,T).
     \end{split}
     \end{equation*}
\end{lemma}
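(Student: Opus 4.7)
My plan is to treat the first step ($n=1$) separately from the generic step ($n\geq 2$), since $\mathcal{R}_1^{1/2}$ is a one-sided endpoint defect while $\mathcal{R}_1^{n-1/2}$ for $n\geq 2$ is the standard trapezoidal quadrature error. For $n=1$ I would rewrite the defect by Fubini as
\[
\mathcal{R}_1^{1/2} \;=\; \frac{1}{k_1}\int_{0}^{t_1}\!\big[Av(t_1)-Av(t)\big]\,dt \;=\; \frac{1}{k_1}\int_{0}^{t_1}\! s\,(Av)'(s)\,ds,
\]
take norms, and use the regularity $\|(Av)'(s)\|\leq C s^{\alpha-1}$ from \eqref{eq2.3}. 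Together with the mesh hypothesis $t_1\leq C_\gamma k^\gamma$ from \eqref{eq3.10}, this yields $k_1\|\mathcal{R}_1^{1/2}\|\leq C t_1^{\alpha+1}\leq C k^{\gamma(\alpha+1)}$, which is majorized by $\Xi(k,\alpha,\gamma,T)$ in every regime.

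For $n\geq 2$, two integrations by parts on $[t_{n-1},t_n]$ produce the Peano representation
\[
\mathcal{R}_1^{n-1/2}\;=\;\frac{1}{2k_n}\int_{t_{n-1}}^{t_n}(t-t_{n-1})(t_n-t)\,(Av)''(t)\,dt,
\]
from which the bound $(t-t_{n-1})(t_n-t)\leq k_n^2/4$ and the regularity $\|(Av)''(t)\|\leq C t^{\alpha-2}$ (cf.~the closing paragraph of Section~\ref{sec2}) give
\[
k_n\|\mathcal{R}_1^{n-1/2}\|\;\leq\; C\,k_n^2\!\int_{t_{n-1}}^{t_n}\! t^{\alpha-2}\,dt.
\]
Using that $t^{\alpha-2}$ is nonincreasing, $t_n\leq C_\gamma t_{n-1}$, and $k_n\leq C k\,t_n^{1-1/\gamma}$ from \eqref{eq3.9}, this simplifies to $k_n\|\mathcal{R}_1^{n-1/2}\|\leq C k^2\, k_n\, t_n^{\alpha-2/\gamma}$.

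Summing in $n$ and comparing $\sum_{n=2}^N k_n t_n^{\alpha-2/\gamma}$ with $\int_{t_1}^{t_N} t^{\alpha-2/\gamma}dt$ produces the three announced regimes. When $\gamma>2/(\alpha+1)$, the exponent $\alpha-2/\gamma>-1$ keeps the integral uniformly bounded on $(0,T]$ and the contribution is $C k^2$; at the critical value $\gamma=2/(\alpha+1)$, the exponent equals $-1$ and the integral equals $\log(t_N/t_1)$, yielding $C k^2\log(t_N/t_1)$; and for $\gamma<2/(\alpha+1)$ the antiderivative is dominated by its value at $t_1$, which combined with $t_1\geq c_\gamma k^\gamma$ gives $Ck^2\cdot k^{\gamma(\alpha+1)-2}=C k^{\gamma(\alpha+1)}$, matching $\Xi$ in each branch.

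The main obstacle is the bookkeeping across the three regimes, particularly the logarithmic boundary case $\gamma=2/(\alpha+1)$ and the careful conversion of the discrete sum into an integral (which has to account for whether $t^{\alpha-2/\gamma}$ is monotone increasing or decreasing on $[t_1,T]$). Once the Peano representation and the regularity bounds on $Av'$ and $Av''$ are in hand, the rest is a routine application of the mesh hypotheses \eqref{eq3.9}--\eqref{eq3.11}.
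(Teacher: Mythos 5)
Your proposal is correct and follows essentially the same route as the paper: the $n=1$ term is handled by the identity $k_1\mathcal{R}_1^{1/2}=\int_0^{k_1}\zeta\,Av'(\zeta)\,d\zeta$ together with $\|Av'\|\leq Ct^{\alpha-1}$, and for $n\geq 2$ both arguments reduce to $k_n\|\mathcal{R}_1^{n-1/2}\|\leq Ck_n^2\int_{t_{n-1}}^{t_n}\|Av''\|\,dt$ before invoking \eqref{eq3.9}--\eqref{eq3.10} and the sum-to-integral comparison \eqref{eq4.18} in the three regimes. The only cosmetic difference is that you use the Peano kernel form of the trapezoidal error where the paper Taylor-expands about the midpoint $t_{n-1/2}$; the resulting bounds are identical.
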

\begin{proof} First, when $n=1$, we use the Taylor expansion with integral remainder to obtain
    \begin{equation}\label{eq4.14}
   \begin{split}
      k_1\mathcal{R}_1^{\frac{1}{2}} = \int_{0}^{k_1}\int_{t}^{k_1} Av'(\zeta)d\zeta dt = \int_{0}^{k_1}\int_{0}^{\zeta} Av'(\zeta)dt d\zeta = \int_{0}^{k_1}\zeta Av'(\zeta) d\zeta.
   \end{split}
   \end{equation}
Similarly, we also have
      \begin{equation}\label{eq4.15}
     \begin{split}
        v(t)-v(t_{n-1/2}) = (t-t_{n-1/2})v'(t_{n-1/2}) + \int_{t_{n-1/2}}^{t} (t-\zeta)v''(\zeta)d\zeta,
     \end{split}
     \end{equation}
and
      \begin{equation}\label{eq4.16}
     \begin{split}
        v^{n-\frac{1}{2}}-v(t_{n-\frac{1}{2}}) = \frac{1}{2} \left[ \int_{t_{n-1}}^{t_{n-\frac{1}{2}}} (\zeta-t_{n-1})v''(\zeta)d\zeta - \int_{t_{n-\frac{1}{2}}}^{t_{n}} (\zeta-t_{n})v''(\zeta)d\zeta \right].
     \end{split}
     \end{equation}
Then for $n\geq2$, we yield
    \begin{equation}\label{eq4.17}
   \begin{split}
      k_n\mathcal{R}_1^{n-\frac{1}{2}} &= \int_{t_{n-1}}^{t_n} A\left[ v^{n-\frac{1}{2}}-v(t_{n-1/2})  \right] d\zeta + \int_{t_{n-1}}^{t_n} A\left[ v(t_{n-1/2}) -v(t)  \right] d\zeta \\
      & = \frac{k_n}{2} \left[ \int_{t_{n-1}}^{t_{n-1/2}} (\zeta-t_{n-1})Av''(\zeta)d\zeta - \int_{t_{n-1/2}}^{t_{n}} (\zeta-t_{n})Av''(\zeta)d\zeta \right] \\
      & - \frac{Av'(t_{n-1/2})}{2}(t-t_{n-1/2})^2 \Big|_{t_{n-1}}^{t_n} - \int_{t_{n-1}}^{t_n}\int_{t_{n-1/2}}^{t} (t-\zeta)Av''(\zeta)d\zeta dt.
   \end{split}
   \end{equation}
Thus, using \eqref{eq4.14}, \eqref{eq4.17}, regularity condition \eqref{eq2.3} and \eqref{eq3.10}, we obtain
      \begin{equation*}
     \begin{split}
       \sum\limits_{n=1}^N k_n \left\|\mathcal{R}_1^{n-\frac{1}{2}}\right\| &\leq  \int_{0}^{k_1}\zeta \|Av'(\zeta)\| d\zeta + \sum\limits_{n=2}^N k_n^2\int_{t_{n-1}}^{t_n} \|Av''(\zeta)\|d\zeta \\
       & \leq C_{\alpha}\left( \int_{0}^{k_1}\zeta^{\alpha}d\zeta + \sum\limits_{n=2}^N k_n^3t_n^{\alpha-2} \right) \\
       & \leq C_{\alpha,\gamma}\left( k^{\gamma(\alpha+1)} + k^2 \sum\limits_{n=2}^N t_n^{\alpha-2/\gamma}k_n \right),
     \end{split}
     \end{equation*}
from which,
      \begin{equation}\label{eq4.18}
     \begin{split}
      \sum\limits_{n=2}^N t_n^{\alpha-2/\gamma}k_n \leq C\int_{k_1}^{t_N}\zeta^{\alpha-2/\gamma}d\zeta \leq C\times
        \begin{cases}
          \frac{k^{\gamma(\alpha+1)-2}}{2/\gamma-(\alpha+1)}, & \mbox{if } \gamma < \frac{2}{\alpha+1}, \\
           \log(t_N/t_1), & \mbox{if }\gamma = \frac{2}{\alpha+1}, \\
          \frac{t_n^{(\alpha+1)-2/\gamma}}{(\alpha+1)-2/\gamma}, & \mbox{if } \gamma > \frac{2}{\alpha+1}.
        \end{cases}
     \end{split}
     \end{equation}
This completes the proof.
\end{proof}

Then, after a similar analysis, the following result holds.
\begin{lemma}\label{lemma4.6} Suppose that assumptions \eqref{eq3.9}-\eqref{eq3.11} are valid and the exact solution satisfies the regularity \eqref{eq2.3}. For $\gamma\geq 1$, $\kappa\geq0$ and $\alpha=\min\limits_{1 \leq j \leq m}\{\alpha_j\}$, it holds that
      \begin{equation*}
     \begin{split}
       \sum\limits_{n=1}^N k_n \left\|\mathcal{R}_3^{n-\frac{1}{2}}\right\| \leq C_{\alpha,\kappa,\gamma,T} \times k^2.
     \end{split}
     \end{equation*}
\end{lemma}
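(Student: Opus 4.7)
The strategy is to mimic the analysis of $\mathcal{R}_1^{n-1/2}$ in Lemma~\ref{lemma4.5} but applied to $v$ rather than $Av$, so that Theorem~\ref{theorem2.4} (giving $\|v'(t)\|\leq C$) and Theorem~\ref{theorem2.5} (giving $\|v''(t)\|\leq Ct^{\alpha-1}$) replace the weaker bounds on $Av'$ and $Av''$. Because the singularity is one order milder in $t$, the case analysis encoded in $\Xi(k,\alpha,\gamma,T)$ will collapse and only the $k^2$ branch will appear.

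For the startup step $n=1$, the plan is to use the Taylor identity with integral remainder exactly as in~\eqref{eq4.14}, which gives
\begin{equation*}
k_1 \mathcal{R}_3^{1/2} = -\kappa \int_0^{k_1} \zeta\, v'(\zeta)\, d\zeta.
\end{equation*}
Taking norms and using the boundedness $\|v'(\zeta)\|\leq C\|u_0\|$ from Theorem~\ref{theorem2.4} yields $k_1\|\mathcal{R}_3^{1/2}\|\leq C\kappa k_1^2$. Invoking \eqref{eq3.10} we have $k_1\leq C_\gamma k^\gamma$, so $k_1^2\leq C k^{2\gamma}\leq C k^2$ whenever $\gamma\geq 1$, which bounds the first term by $C_{\kappa,\gamma} k^2$.

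For $n\geq 2$, I would repeat the expansion \eqref{eq4.17} with $A$ deleted, giving
\begin{equation*}
k_n\mathcal{R}_3^{n-\frac{1}{2}} = -\kappa\Bigl\{ k_n[v^{n-\frac{1}{2}}-v(t_{n-\frac{1}{2}})]-\int_{t_{n-1}}^{t_n} \bigl[v(t)-v(t_{n-\frac{1}{2}})\bigr] dt\Bigr\},
\end{equation*}
and substituting \eqref{eq4.15}--\eqref{eq4.16}; the odd-symmetric term containing $v'(t_{n-1/2})$ integrates to zero. The resulting bound reads
\begin{equation*}
k_n\|\mathcal{R}_3^{n-\frac{1}{2}}\|\leq C\kappa\, k_n^2 \int_{t_{n-1}}^{t_n}\|v''(\zeta)\|\,d\zeta \leq C\kappa\, k_n^2 \int_{t_{n-1}}^{t_n}\zeta^{\alpha-1}\,d\zeta,
\end{equation*}
using Theorem~\ref{theorem2.5}. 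Summing over $n\geq 2$ and using the mesh bounds $k_n\leq C_\gamma k\, t_n^{1-1/\gamma}$ from \eqref{eq3.9} together with $t_n\leq C_\gamma t_{n-1}$ from \eqref{eq3.10} (so $\zeta\sim t_n$ on $[t_{n-1},t_n]$), I would collapse the sum to an integral
\begin{equation*}
\sum_{n=2}^{N} k_n\|\mathcal{R}_3^{n-\frac{1}{2}}\|\leq C_{\alpha,\kappa,\gamma}\, k^2 \int_{t_1}^{t_N}\zeta^{\alpha+1-2/\gamma}\,d\zeta.
\end{equation*}

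The only place where care is needed is verifying that the exponent in the final integral does not produce a negative power of $k$. Since $\gamma\geq 1>2/(\alpha+2)$ for all $\alpha\in(0,1)$, one has $\alpha+2-2/\gamma>0$, so the integral is finite and bounded by $C_{\alpha,\gamma} T^{\alpha+2-2/\gamma}$. Combined with the $n=1$ estimate, this yields $\sum_{n=1}^N k_n\|\mathcal{R}_3^{n-1/2}\|\leq C_{\alpha,\kappa,\gamma,T}\, k^2$, which is the claimed bound. The step I expect to be most delicate is not an analytic obstacle but rather a bookkeeping one, namely isolating the correct cancellation in the expansion of $v^{n-1/2}-\tfrac{1}{k_n}\int_{t_{n-1}}^{t_n}v(t)\,dt$ so that no $v'$ term survives and only $v''$ appears; this is what guarantees the full second-order rate uniformly in $\gamma\geq 1$ and removes the case split from $\Xi$.
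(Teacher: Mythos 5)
Your proposal is correct and follows essentially the same route as the paper: the identical Taylor expansion with integral remainder reduces the $n=1$ term to $\kappa\int_0^{k_1}\zeta\|v'(\zeta)\|\,d\zeta$, and the same midpoint expansions \eqref{eq4.15}--\eqref{eq4.16} (with the $v'(t_{n-1/2})$ term vanishing by symmetry) reduce the $n\geq 2$ terms to $\kappa k_n^2\int_{t_{n-1}}^{t_n}\|v''(\zeta)\|\,d\zeta$, after which Theorems \ref{theorem2.4} and \ref{theorem2.5} give the $k^2$ bound. The only differences are cosmetic: the paper closes the sum by telescoping $\sum_n k_n^2(t_n^\alpha-t_{n-1}^\alpha)\leq Ck^2 T^\alpha$ using just $k_n\leq Ck$, while you pass to the integral $k^2\int\zeta^{\alpha+1-2/\gamma}\,d\zeta$ (both valid), and the upper bound $k_1\leq C k^\gamma$ you invoke actually follows from \eqref{eq3.9} rather than from \eqref{eq3.10}, which only supplies the lower bound $k_1\geq c_\gamma k^\gamma$.
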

\begin{proof} For $n=1$, using the Taylor expansion with integral remainder, we get
    \begin{equation}\label{eq4.19}
   \begin{split}
      k_1\mathcal{R}_3^{\frac{1}{2}} = -\kappa\int_{0}^{k_1}\int_{t}^{k_1} v'(\zeta)d\zeta dt = -\kappa\int_{0}^{k_1}\zeta v'(\zeta) d\zeta.
   \end{split}
   \end{equation}
Then for $n\geq2$, we employ \eqref{eq4.15} and \eqref{eq4.16} to obtain
    \begin{equation}\label{eq4.20}
   \begin{split}
      k_n\mathcal{R}_3^{n-\frac{1}{2}} &= \int_{t_{n-1}}^{t_n} \left[ v^{n-\frac{1}{2}}-v(t_{n-1/2})  \right] d\zeta + \int_{t_{n-1}}^{t_n} \left[ v(t_{n-1/2}) -v(t)  \right] d\zeta \\
      & =- \frac{\kappa k_n}{2} \left[ \int_{t_{n-1}}^{t_{n-1/2}} (\zeta-t_{n-1})v''(\zeta)d\zeta - \int_{t_{n-1/2}}^{t_{n}} (\zeta-t_{n})v''(\zeta)d\zeta \right] \\
      & + \frac{\kappa v'(t_{n-1/2})}{2}(t-t_{n-1/2})^2 \Big|_{t_{n-1}}^{t_n} + \kappa\int_{t_{n-1}}^{t_n}\int_{t_{n-1/2}}^{t} (t-\zeta)v''(\zeta)d\zeta dt.
   \end{split}
   \end{equation}
Next, applying \eqref{eq4.19}, \eqref{eq4.20}, Theorem \ref{theorem2.4} and Theorem \ref{theorem2.5}, we have
      \begin{equation*}
     \begin{split}
       \sum\limits_{n=1}^N k_n \left\|\mathcal{R}_3^{n-\frac{1}{2}}\right\| &\leq  \kappa\int_{0}^{k_1}\zeta \|v'(\zeta)\| d\zeta + \kappa\sum\limits_{n=2}^N k_n^2\int_{t_{n-1}}^{t_n} \|v''(\zeta)\|d\zeta \\
       & \leq C_{\alpha,\kappa}\left( \int_{0}^{k_1}\zeta d\zeta + \sum\limits_{n=2}^N k_n^2 \left(t_n^{\alpha}- t_{n-1}^{\alpha}\right) \right) \\
       & \leq C_{\alpha,\kappa,\gamma}\left( k_1^2 + k^2 \sum\limits_{n=2}^N \left(t_n^{\alpha}- t_{n-1}^{\alpha}\right) \right),
     \end{split}
     \end{equation*}
which finishes the proof.
\end{proof}

\vskip 0.1in
Based on above analyses, we can yield the following convergence result.
\begin{theorem}\label{theorem4.7} Let $V^n$ denoted by \eqref{eq3.17} and \eqref{eq3.18} and $U^n$ be the approximate solution of and $v(t_n)$ and $u(t_n)$, respectively. For $T<\infty$, $\alpha=\min\limits_{1 \leq j \leq m}\{\alpha_j\}$, $1\leq m <\infty$, $\gamma\geq 1$ and $0\leq \kappa <\infty$, then it holds that
   \begin{equation*}
   \begin{split}
      & \max\limits_{1\leq n \leq N} \|V^n-v(t_n)\| \leq C_{\alpha,\kappa,\gamma,T,m} \times \Xi(k,\alpha,\gamma,T), \\
      &  \max\limits_{1\leq n \leq N} \|U^n-u(t_n)\| \leq C_{\alpha,\kappa,\gamma,T,m} \times \Xi(k,\alpha,\gamma,T),
   \end{split}
   \end{equation*}
where the notation $\Xi(k,\alpha,\gamma,T)$ is denoted in Lemma \ref{lemma4.3}.
\end{theorem}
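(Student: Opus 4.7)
The plan is to reduce the convergence estimate to the truncation-error bounds already established, by applying the stability theorem (Theorem~\ref{theorem4.1}) directly to the error equations \eqref{eq4.12}--\eqref{eq4.13}. These equations have exactly the same structure as the scheme \eqref{eq3.17}--\eqref{eq3.18}, with $V^n$ replaced by $\rho^n$ and the data $g^{n-1/2}$ replaced by the truncation error $\mathcal{R}^{n-1/2}$. Because the derivation of Theorem~\ref{theorem4.1} relies only on the positivity of $A$ and $B_j$, the Cauchy--Schwarz inequality, and the discrete Gr\"onwall inequality, the same argument applies verbatim to $\rho^n$. Since $\rho^0 = 0$, I would thereby obtain
\begin{equation*}
\max_{1\leq n \leq N} \|\rho^n\| \leq C(T) \sum_{n=1}^N k_n \left\|\mathcal{R}^{n-1/2}\right\|.
\end{equation*}

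Next, using the decomposition $\mathcal{R}^{n-1/2} = \sum_{s=1}^4 \mathcal{R}_s^{n-1/2}$ from \eqref{eq3.16}, I would bound each term by the corresponding consistency lemma: Lemma~\ref{lemma4.5} for $\mathcal{R}_1$, Lemma~\ref{lemma4.3} for $\mathcal{R}_2$, Lemma~\ref{lemma4.6} for $\mathcal{R}_3$, and Lemma~\ref{lemma4.4} for $\mathcal{R}_4$. The regularity hypothesis \eqref{eq2.3} needed by these lemmas is supplied by Theorems~\ref{theorem2.4}--\ref{theorem2.7}. Three of the four contributions are of size $\Xi(k,\alpha,\gamma,T)$, while the bound $k^2$ for $\mathcal{R}_3$ is always dominated by $\Xi(k,\alpha,\gamma,T)$. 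Summing and absorbing the constants into $C_{\alpha,\kappa,\gamma,T,m}$ delivers the first estimate of the theorem.

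For the bound on $e^n = u(t_n) - U^n$, I would invoke the exponential substitution $v(t) = e^{\kappa t}u(t)$ from \eqref{eq2.1}, under which the discrete counterpart $U^n = e^{-\kappa t_n} V^n$ produces the consistent scheme for $u$. This gives the pointwise identity $e^n = e^{-\kappa t_n}\rho^n$, and since $0 < e^{-\kappa t_n} \leq 1$ for $\kappa \geq 0$, the estimate on $\|\rho^n\|$ transfers directly to $\|e^n\|$.

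The main technical work has already been done in the consistency Lemmas~\ref{lemma4.3}--\ref{lemma4.6}, so no new estimates are required. The only subtle point is to verify that the stability proof of Theorem~\ref{theorem4.1} passes through unchanged when the right-hand side $g^{n-1/2}$ is replaced by a generic sequence $\mathcal{R}^{n-1/2}$; this is immediate by inspection, since the argument never uses any special property of $g^{n-1/2}$ beyond its norm on the right-hand side of \eqref{eq4.5}.
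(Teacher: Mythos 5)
Your proposal is correct and follows essentially the same route as the paper: apply the stability estimate of Theorem~\ref{theorem4.1} to the error equations \eqref{eq4.12}--\eqref{eq4.13} with $\rho^0=0$, bound $\sum_n k_n\|\mathcal{R}^{n-1/2}\|$ via Lemmas~\ref{lemma4.3}--\ref{lemma4.6}, and transfer the bound from $\rho^n$ to $e^n$ through the relation \eqref{eq2.1} (the factor $e^{-\kappa t_n}\leq 1$). Your observation that the $k^2$ bound for $\mathcal{R}_3$ is absorbed into $\Xi(k,\alpha,\gamma,T)$ is also consistent with the paper's conclusion.
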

\begin{proof} In view of \eqref{eq4.11}-\eqref{eq4.13} and Theorem \ref{theorem4.1}, we get
   \begin{equation*}
      \begin{split}
      &\max\limits_{1\leq n \leq N} \|\rho^n\| \leq C(T) \left( \|\rho^0\| + \sum_{n=1}^{N} k_n \left\|\mathcal{R}^{n-1/2}\right\| \right), \\
      &\max\limits_{1\leq n \leq N} \|e^n\| \leq C(T) \left( \|e^0\| + \sum_{n=1}^{N} k_n \left\|\mathcal{R}^{n-1/2}\right\| \right),
      \end{split}
   \end{equation*}
from which $\|\rho^0\|=\|e^0\|=0$. Then utilizing the triangle inequality, \eqref{eq2.1} and Lemmas \ref{lemma4.3}-\ref{lemma4.6}, the proof is completed.
\end{proof}

\section{Numerical experiment} \label{sec5} In this section, in order to further show the time convergence of proposed scheme, we formulate the fully discrete scheme by the time semidiscrete scheme \eqref{eq3.17}-\eqref{eq3.19} and a standard spatial finite difference method, with the spatial step size $h=\frac{L}{M}$ ($M\in \mathbb{Z}^+$ is the number of spatial partitions). Below we choose the parameters $T=L=1$ and denote the $L_{2}$-norm error
     \begin{equation*}
     \mathcal{E}_{CN}(N,M) = \max\limits_{1\leq n \leq N} \|U^n-u(t_n)\|,
     \end{equation*}
and the temporal convergence rate
     \begin{equation*}
       rate_{CN}^k = \log_{2}\left(\frac{\mathcal{E}_{CN}(N,M)}{\mathcal{E}_{CN}(2N,M)}\right).
     \end{equation*}
In addition, we define the following notations for illustrating the numerical stability of the proposed scheme,
     \begin{equation*}
          U^{*} = \max\limits_{1\leq n \leq N} \|U^n\|,  \quad  U^{**} = \sum_{n=1}^{N} k_n \|U^n\|.
     \end{equation*}

\begin{example} \label{Ex1} Here we first consider the one-dimensional case ($d=1$) of \eqref{eq1.1} with the parameter $m=2$. Set the operators $A=B_1=B_2=-\frac{\partial^2}{\partial x_1^2}$ over the domain $\Omega=(0,L)$ with the homogeneous Dirichlet boundary conditions (DBCs). To meet the regularity assumption \eqref{eq2.3}, the exact solution of \eqref{eq1.1} is given via
    \begin{equation*}
              u(x_1,t) =  \left( t^{1+\alpha_1} + t^{1+\alpha_2} \right) e^{-\kappa t}\sin \pi x_1 ,
   \end{equation*}
thus the initial condition $u_0(x_1)=0$ and the source term $f(x_1,t)$ can be computed accordingly.
\end{example}

In Table \ref{eqtable1}, we list the $L_2$-norm errors and time convergence rates by fixing $\kappa=1$, $\alpha_1=0.2$, $\alpha_2=0.8$ and $M=256$, from which,  we discuss three cases regarding grading index $\gamma$, i.e., $\gamma=1$, $\gamma=\frac{2}{\alpha+1}$ and $\gamma=\frac{2}{\alpha+1}+1$, respectively. It can be seen clearly from Table \ref{eqtable1} that time convergence rates of proposed scheme reach the order $1+\min\{\alpha_1,\alpha_2\}$ with uniform temporal step sizes, and second-order accuracy for time can be obtained with nonuniform temporal step sizes ($\gamma\geq\frac{2}{\alpha+1}$). These are consistent with Theorem \ref{theorem4.7}. Below we only consider the cases with the optimal grading index $\gamma=\frac{2}{\alpha+1}$.

Table \ref{eqtable2} shows the $L_2$-norm errors and time convergence rates when $\kappa=2$, $\gamma=\frac{2}{\alpha+1}$ and $M=512$, from which we present three cases with different $\alpha_1$ and $\alpha_2$, including (i) $\alpha_1<\frac{1}{2}<\alpha_2$, (ii) $\alpha_1,\alpha_2<\frac{1}{2}$ and (iii) $\alpha_1,\alpha_2>\frac{1}{2}$. Then, the numerical results in Table \ref{eqtable2} demonstrate that unform temporal second-order accuracy can be yielded under three situations.

Besides, in Table \ref{eqtable3}, fixed $\alpha_1=\alpha_2=0.5$, $\gamma=\frac{2}{\alpha+1}$ and $M=512$, we discuss three cases about different tempered parameter $\kappa$, involving $\kappa=0.2$, $\kappa=1$ and $\kappa=5$, respectively, from which the results approximate second-order convergence for time. These validate our theoretical analysis and illustrate the effectiveness of proposed scheme.

Then, in Table \ref{eqtable4}, fixed $\kappa=1$, $\gamma=\frac{2}{\alpha+1}$ and $M=256$, we list two types of numerical solution $U^{*}$ and $U^{**}$. Table \ref{eqtable4} show that as $N$ increases gradually, the value of $U^{*}$ gradually stabilizes and finally remains unchanged, and the value of $U^{**}$ has maintained a non-increasing trend. These results demonstrate the numerical stability of proposed scheme in the time direction. Furthermore, the same phenomenon is exhibited in Table \ref{eqtable5}, by fixing some different parameters.

\begin{table}
\caption{Example \ref{Ex1}:  $L_2$-norm errors and time convergence rates when $\kappa=1$, $\alpha_1=0.2$, $\alpha_2=0.8$ and $M=256$.}\label{eqtable1}
\begin{center} \footnotesize
\renewcommand\arraystretch{1.5}
\noindent\[
\begin{array}{|c||c|c|c|c|c|c|}
\hline
     & \multicolumn{2}{|c|}{\text{$\gamma=1$}} & \multicolumn{2}{|c|}{\text{$\gamma=\frac{2}{\alpha+1}$}} & \multicolumn{2}{|c|}{\text{$\gamma=\frac{2}{\alpha+1}+1$}}  \\ \hline
N    & \mathcal{E}_{CN} & rate^{k}_{CN} & \mathcal{E}_{CN} & rate^{k}_{CN} & \mathcal{E}_{CN} & rate^{k}_{CN}  \\ \hline
16   & $1.2098e-02$  & *      & $1.1501e-03$  & *      & $8.3435e-04$  & *        \\
32   & $4.9207e-03$  & 1.30   & $2.9352e-04$  & 1.97   & $2.2642e-04$  & 1.88    \\
64   & $2.0432e-03$  & 1.27   & $7.9232e-05$  & 1.89   & $5.6305e-05$  & 2.01    \\
128  & $8.6517e-04$  & 1.24   & $2.1719e-05$  & 1.87   & $1.0427e-05$  & 2.43    \\
256  & $3.7105e-04$  & 1.22   & $5.9279e-06$  & 1.87   & $2.0922e-06$  & 2.32    \\
\hline
\end{array}
\]
\end{center}
\end{table}

\begin{table}
\caption{Example \ref{Ex1}: $L_2$-norm errors and time convergence rates when $\kappa=2$, $\gamma=\frac{2}{\alpha+1}$ and $M=512$.}\label{eqtable2}
\begin{center} \footnotesize
\renewcommand\arraystretch{1.5}
\noindent\[
\begin{array}{|c||c|c|c|c|c|c|}
\hline
     & \multicolumn{2}{|c|}{\text{$\alpha_1=0.15$, $\alpha_2=0.85$}} & \multicolumn{2}{|c|}{\text{$\alpha_1=0.10$, $\alpha_2=0.20$}} & \multicolumn{2}{|c|}{\text{$\alpha_1=0.80$, $\alpha_2=0.90$}}  \\ \hline
N    & \mathcal{E}_{CN} & rate^{k}_{CN} & \mathcal{E}_{CN} & rate^{k}_{CN} & \mathcal{E}_{CN} & rate^{k}_{CN}  \\ \hline
8    & $4.8275e-03$  & *      & $8.0324e-03$  & *      & $4.7639e-03$  & *        \\
16   & $1.1850e-03$  & 2.03   & $1.9166e-03$  & 2.07   & $8.8628e-04$  & 2.43    \\
32   & $3.0304e-04$  & 1.97   & $4.7020e-04$  & 2.03   & $1.6001e-04$  & 2.47    \\
64   & $8.1431e-05$  & 1.90   & $1.2227e-04$  & 1.94   & $3.3122e-05$  & 2.27    \\
128  & $2.2048e-05$  & 1.88   & $3.2098e-05$  & 1.93   & $8.0217e-06$  & 2.05    \\
\hline
\end{array}
\]
\end{center}
\end{table}

\begin{table}
\caption{Example \ref{Ex1}:  $L_2$-norm errors and time convergence rates when $\alpha_1=\alpha_2=0.5$, $\gamma=\frac{2}{\alpha+1}$ and $M=512$.}\label{eqtable3}
\begin{center} \footnotesize
\renewcommand\arraystretch{1.5}
\noindent\[
\begin{array}{|c||c|c|c|c|c|c|}
\hline
     & \multicolumn{2}{|c|}{\text{$\kappa=0.2$}} & \multicolumn{2}{|c|}{\text{$\kappa=1$}} & \multicolumn{2}{|c|}{\text{$\kappa=5$}}  \\ \hline
N    & \mathcal{E}_{CN} & rate^{k}_{CN} & \mathcal{E}_{CN} & rate^{k}_{CN} & \mathcal{E}_{CN} & rate^{k}_{CN}  \\ \hline
8    & $8.1009e-03$  & *      & $7.6310e-03$  & *      & $5.5947e-03$  & *        \\
16   & $1.7039e-03$  & 2.25   & $1.6545e-03$  & 2.20   & $1.4190e-03$  & 1.98    \\
32   & $3.9316e-04$  & 2.12   & $3.8782e-04$  & 2.09   & $3.6162e-04$  & 1.97    \\
64   & $1.0427e-04$  & 1.91   & $1.0361e-04$  & 1.90   & $1.0037e-04$  & 1.85    \\
128  & $2.8791e-05$  & 1.86   & $2.8701e-05$  & 1.85   & $2.8255e-05$  & 1.83    \\
256  & $8.0558e-06$  & 1.84   & $8.0397e-06$  & 1.84   & $7.9604e-06$  & 1.83    \\
\hline
\end{array}
\]
\end{center}
\end{table}

\begin{table}
\caption{Example \ref{Ex1}: Numerical solutions when $\kappa=1$, $\gamma=\frac{2}{\alpha+1}$ and $M=256$.}\label{eqtable4}
\begin{center} \footnotesize
\renewcommand\arraystretch{1.5}
\noindent\[
\begin{array}{|c||c|c|c|c|c|c|}
\hline
     & \multicolumn{2}{|c|}{\text{$\alpha_1=0.15$, $\alpha_2=0.85$}} & \multicolumn{2}{|c|}{\text{$\alpha_1=0.10$, $\alpha_2=0.20$}} & \multicolumn{2}{|c|}{\text{$\alpha_1=0.80$, $\alpha_2=0.90$}}  \\ \hline
N    & U^{*} & U^{**} & U^{*} & U^{**} & U^{*} & U^{**}  \\ \hline
8    & $3.2498e-02$  & $1.9364e-02$   & $3.2506e-02$  & $2.2373e-02$  & $3.2499e-02$  & $1.6138e-02$      \\
16   & $3.2512e-02$  & $1.8805e-02$   & $3.2514e-02$  & $2.1881e-02$  & $3.2512e-02$  & $1.5631e-02$   \\
32   & $3.2515e-02$  & $1.8518e-02$   & $3.2516e-02$  & $2.1626e-02$  & $3.2516e-02$  & $1.5373e-02$   \\
64   & $3.2516e-02$  & $1.8374e-02$   & $3.2516e-02$  & $2.1497e-02$  & $3.2516e-02$  & $1.5243e-02$   \\
128  & $3.2517e-02$  & $1.8301e-02$   & $3.2517e-02$  & $2.1432e-02$  & $3.2517e-02$  & $1.5177e-02$   \\
256  & $3.2517e-02$  & $1.8264e-02$   & $3.2517e-02$  & $2.1399e-02$  & $3.2517e-02$  & $1.5145e-02$   \\
512  & $3.2517e-02$  & $1.8246e-02$   & $3.2517e-02$  & $2.1382e-02$  & $3.2517e-02$  & $1.5129e-02$    \\
\hline
\end{array}
\]
\end{center}
\end{table}

\begin{table}
\caption{Example \ref{Ex1}: Numerical solutions when $\alpha_1=\alpha_2=0.5$, $\gamma=\frac{2}{\alpha+1}$ and $M=32$.}\label{eqtable5}
\begin{center} \footnotesize
\renewcommand\arraystretch{1.5}
\noindent\[
\begin{array}{|c||c|c|c|c|c|c|}
\hline
     & \multicolumn{2}{|c|}{\text{$\kappa=0$}} & \multicolumn{2}{|c|}{\text{$\kappa=0.5$}} & \multicolumn{2}{|c|}{\text{$\kappa=1$}}  \\ \hline
N    & U^{*} & U^{**} & U^{*} & U^{**} & U^{*} & U^{**}  \\ \hline
32   & $2.5016e-01$  & $1.0454e-01$   & $1.5173e-01$  & $7.3025e-02$  & $9.2033e-02$  & $5.1659e-02$   \\
64   & $2.5018e-01$  & $1.0231e-01$   & $1.5175e-01$  & $7.1730e-02$  & $9.2040e-02$  & $5.0921e-02$   \\
128  & $2.5019e-01$  & $1.0119e-01$   & $1.5175e-01$  & $7.1080e-02$  & $9.2042e-02$  & $5.0548e-02$   \\
256  & $2.5019e-01$  & $1.0063e-01$   & $1.5175e-01$  & $7.0753e-02$  & $9.2043e-02$  & $5.0360e-02$   \\
512  & $2.5019e-01$  & $1.0035e-01$   & $1.5175e-01$  & $7.0590e-02$  & $9.2043e-02$  & $5.0266e-02$   \\
1024 & $2.5019e-01$  & $1.0021e-01$   & $1.5175e-01$  & $7.0509e-02$  & $9.2043e-02$  & $5.0219e-02$    \\
2048 & $2.5019e-01$  & $1.0014e-01$   & $1.5175e-01$  & $7.0468e-02$  & $9.2043e-02$  & $5.0195e-02$    \\
\hline
\end{array}
\]
\end{center}
\end{table}

\vskip 0.05in
\begin{example} \label{Ex2}  In this example, we consider the two-dimensional (2D) case ($d=2$) of \eqref{eq1.1} with $m=2$. Let $A$ be the 2D Laplace operator, and the operators $B_1=-\frac{\partial^2}{\partial x_1^2}$ and $B_2=-\frac{\partial^2}{\partial x_2^2}$ over the domain $\Omega=(0,L)\times(0,L)$ with the homogeneous DBCs. In order to satisfy the regularity assumption \eqref{eq2.3}, we present the exact solution of \eqref{eq1.1} as follows
    \begin{equation*}
              u(x_1,x_2,t) =  \left( t^{1+\min\{\alpha_1,\alpha_2\}} + 1 \right) e^{-\kappa t}\sin \pi x_1 \sin \pi x_2,
   \end{equation*}
then the initial condition
    \begin{equation*}
              u_0(x_1,x_2)=\sin \pi x_1 \sin \pi x_2,
   \end{equation*}
and the source term $f(x_1,x_2,t)$ can be calculated correspondingly.
\end{example}

Here, in Table \ref{eqtable6} we present the $L_2$-norm errors and time convergence rates when fixing $\kappa=2$, $\gamma=\frac{2}{\alpha+1}$ and $M=70$. The numerical results incarnate time second-order convergence by selecting three kinds of values of $\alpha_1$ and $\alpha_2$, when $N$ increases gradually.

Fixing $\alpha_1=0.3$, $\alpha_2=0.6$, $\gamma=\frac{2}{\alpha+1}+\frac{1}{2}$ and $M=80$, the numerical results from Table \ref{eqtable7} approximate second order in the time direction, with different tempered parameter $\kappa$ $(\kappa=0,0.2,1,5)$, which is in accordance with the theory.

\begin{table}
\caption{Example \ref{Ex2}:  $L_2$-norm errors and time convergence rates when $\kappa=2$, $\gamma=\frac{2}{\alpha+1}$ and $M=70$.}\label{eqtable6}
\begin{center} \footnotesize
\renewcommand\arraystretch{1.5}
\noindent\[
\begin{array}{|c||c|c|c|c|c|c|}
\hline
     & \multicolumn{2}{|c|}{\text{$\alpha_1=0.10$, $\alpha_2=0.90$}} & \multicolumn{2}{|c|}{\text{$\alpha_1=0.15$, $\alpha_2=0.20$}} & \multicolumn{2}{|c|}{\text{$\alpha_1=0.80$, $\alpha_2=0.75$}}  \\ \hline
N    & \mathcal{E}_{CN} & rate^{k}_{CN} & \mathcal{E}_{CN} & rate^{k}_{CN} & \mathcal{E}_{CN} & rate^{k}_{CN}  \\ \hline
12    & $1.3009e-02$  & *      & $1.8510e-02$  & *      & $2.8330e-03$  & *        \\
24    & $3.8048e-03$  & 1.77   & $5.5602e-03$  & 1.74   & $8.7227e-04$  & 1.70    \\
48    & $1.0148e-03$  & 1.91   & $1.5145e-03$  & 1.88   & $2.4016e-04$  & 1.86    \\
96    & $2.6995e-04$  & 1.91   & $4.0689e-04$  & 1.90   & $6.2739e-05$  & 1.94    \\
\hline
\end{array}
\]
\end{center}
\end{table}

\begin{table}
\caption{Example \ref{Ex2}:  $L_2$-norm errors and time convergence rates when $\alpha_1=0.3$, $\alpha_2=0.6$, $\gamma=\frac{2}{\alpha+1}+\frac{1}{2}$ and $M=80$.}\label{eqtable7}
\begin{center} \footnotesize
\renewcommand\arraystretch{1.5}
\noindent\[
\begin{array}{|c||c|c|c|c|c|c|c|c|}
\hline
     & \multicolumn{2}{|c|}{\text{$\kappa=0$}} & \multicolumn{2}{|c|}{\text{$\kappa=0.2$}} & \multicolumn{2}{|c|}{\text{$\kappa=1$}} & \multicolumn{2}{|c|}{\text{$\kappa=5$}}  \\ \hline
N    & \mathcal{E}_{CN} & rate^{k}_{CN}  & \mathcal{E}_{CN} & rate^{k}_{CN} & \mathcal{E}_{CN} & rate^{k}_{CN} & \mathcal{E}_{CN} & rate^{k}_{CN}  \\ \hline
4    & $2.3908e-02$  & *     & $2.3711e-02$  & *      & $2.2942e-02$  & *      & $1.9567e-02$  & *       \\
8    & $5.8707e-03$  & 2.03  & $5.8643e-03$  & 2.01   & $5.8393e-03$  & 1.97   & $5.7221e-03$  & 1.77    \\
16   & $1.3915e-03$  & 2.08  & $1.3904e-03$  & 2.08   & $1.3858e-03$  & 2.07   & $1.3634e-03$  & 2.07   \\
32   & $3.2342e-04$  & 2.10  & $3.2318e-04$  & 2.10   & $3.2219e-04$  & 2.10   & $3.1738e-04$  & 2.10   \\
64   & $1.0394e-04$  & 1.64  & $8.5550e-05$  & 1.92   & $6.6420e-05$  & 2.28   & $6.5801e-05$  & 2.27    \\
\hline
\end{array}
\]
\end{center}
\end{table}

\section{Concluding remarks} \label{sec6}
In this work, we have considered and analyzed numerical solutions for Volterra integrodifferential equations with tempered multi-term kernels. First we deduced certain regularity estimates of the exact solution of \eqref{eq1.1}. Then under graded meshes, we applied the Crank-Nicolson method and PI rule to construct a time discrete scheme. Based on the regularity assumptions, we proved the unconditional stability and accurate second-order convergence for time by the energy argument. Finally, numerical experiments verified our theoretical results.

\vskip 0.2mm
Moreover, the regularity of exact solution of \eqref{eq1.1} can be extended to a semilinear case (source term $f(t)$ replaced by $f(t,u(t))$) by certain appropriate assumptions; also, theoretical results about new numerical scheme could be similarly yielded by adding a Lipschitz condition $|f(t,u_1)-f(t,u_2)|\leq \mathcal{L}|u_1-u_2|$.

\vskip 0.2mm
Note that in this paper we only consider the discrete scheme and numerical analysis in the time direction. In our future work, the fully discrete scheme can be considered combined with some high-precision spatial-discrete techniques, such as local discontinuous Galerkin method, finite difference method, orthogonal spline collocation method, compatible wavelet, etc.

\section*{Conflict of Interest Statement}
The authors declare that they do not have any conflicts of interest.

\section*{Acknowledgements}
The first author would like to thank the reviewers for their helpful suggestions and comments to improve the quality of this paper. In addition, the first author is also very grateful to his girlfriend Dr. Kexin Li for her support in scientific research and care in life.



\begin{thebibliography}{10}


  \bibitem {Cao} {\sc Y. Cao, O. Nikan and Z. Avazzadeh}, {\em A localized meshless technique for solving 2D nonlinear integro-differential equation with multi-term kernels}, Appl. Numer. Math., 2022.

  \bibitem {Carslaw} {\sc H. S. Carslaw and J. C. Jaeger}, {\em Conduction of Heat in Solids}, 2nd ed. Oxford: Charendon Press, 1959.

  \bibitem {Chen} {\sc C. Chen, V. Thom\'{e}e, L. B. Wahlbin}, {\em Finite element approximation of a parabolic integro-differential equation with a weakly singular kernel}, Math. Comput., 58 (1992), pp.~587--602.

  \bibitem {Friedman} {\sc A. Friedman and M. Shinbrot}, {\em Volterra integral equations in Banach space}, Trans. Amer. Math. Soc., 126 (1967), pp.~131--179.

   \bibitem {Hannsgen} {\sc K. B. Hannsgen and R. L. Wheeler}, {\em Uniform $L^1$ behavior in classes of integrodifferential equations with completely monotonic kernels}, SIAM J. Math. Anal., 1984, 15: 579--594

   \bibitem {Heard} {\sc M. L. Heard}, {\em An abstract parabolic Volterra integrodifferential equation}, SIAM J. Math. Anal., 13 (1982), pp.~81--105.

    \bibitem {Jin} {\sc B. Jin, R. Lazarov, Y. Liu and Z. Zhou}, {\em The Galerkin finite element method for a multi-term time-fractional diffusion equation}, J. Comput. Phys., 281 (2015), pp.~825--843.

   \bibitem {Larsson} {\sc S. Larsson, V. Thom\'{e}e, L. Wahlbin}, {\em Numerical solution of parabolic integro-differential equations by the discontinuous Galerkin method}, Math. Comput., 67 (1998), pp.~45--71.

   \bibitem {Liu} {\sc F. Liu, M. M. Meerschaert, R. J. McGough et al.}, {\em Numerical methods for solving the multi-term time-fractional wave-diffusion equation}, Fract. Calcul. Appl. Anal., 16 (2013), pp.~9--25.

   \bibitem {MacCamy} {\sc R. C. MacCamy}, {\em An integro-differential equation with application in heat flow}, Quart. Appl. Math., 35 (1977), pp.~1--19.

   \bibitem {McLean} {\sc W. McLean and K. Mustapha}, {\em A second-order accurate numerical method for a fractional wave equation}, Numer. Math., 105 (2007), pp.~481--510.

   \bibitem {Mustapha} {\sc K. Mustapha and H. Mustapha}, {\em A second-order accurate numerical method for a semilinear integro-differential equation with a weakly singular kernel}, IMA J. Numer. Anal., 30 (2010), pp.~555--578.

   \bibitem {Noren1} {\sc R. Noren}, {\em Uniform $L^1$ behavior in classes of integro-differential equations with convex kernels}, J. Integr. Equ. Appl., 1 (1988), pp.~385--369.

   \bibitem {Noren2} {\sc R. Noren}, {\em Uniform $L^1$ behavior in a class of linear Volterra equations}, Quart. Appl. Math., 47 (1989), pp.~547--554.

   \bibitem {Pazy} {\sc A. Pazy}, {\em Semigroups of linear operators and applications to partial differential equations}, New York, Springer, 1983.

   \bibitem {Podlubny} {\sc I. Podlubny}, {\em Fractional Differential Equations}, Academic Press, San Diego, 1999.

   \bibitem {Qiu} {\sc W. Qiu, D. Xu, J. Guo}, {\em  Numerical solution of the fourth-order partial integro-differential equation with multi-term kernels by the Sinc-collocation method based on the double exponential transformation}, Appl. Math. Comput., 392 (2021), p.~125693.

    \bibitem {Sabzikar} {\sc F. Sabzikar, M. M. Meerschaert and J. Chen}, {\em Tempered fractional calculus}, J. Comput. Phys., 293 (2015), pp.~14-28.


   \bibitem {Sloan} {\sc I. H. Sloan, V. Thom\'{e}e}, {\em Time discretization of an integro-differential equation of parabolic type}, SIAM J. Numer. Anal., 23 (1986), pp.~1052--1061.

   \bibitem {Xu1} {\sc D. Xu}, {\em The long-time global behavior of time discretization for fractional order Volterra equations}, Calcolo, 35 (1998), pp.~93--116.

   \bibitem {Xu2} {\sc D. Xu}, {\em The time discretization in classes of integro-differential equations with completely monotonic kernels: Weighted asymptotic stability}, Sci. China Math., 56 (2013), pp.~395--424.

   \bibitem {Xu3} {\sc D. Xu}, {\em The time discretization in classes of integro‐differential equations with completely monotonic kernels: Weighted asymptotic convergence}, Numer. Methods Part. Differ. Equ., 32 (2016), pp.~896--935.

   \bibitem {Xu4} {\sc D. Xu}, {\em Numerical asymptotic stability for the integro-differential equations with the multi-term kernels}, Appl. Math. Comput., 309 (2017), pp.~107--132.

   \bibitem {Xu5} {\sc D. Xu}, {\em Second-order difference approximations for Volterra equations with the completely monotonic kernels}, Numer. Algorithms, 81 (2019), pp.~1003--1041.

    \bibitem {Zhou} {\sc J. Zhou, D. Xu and H. Chen}, {\em A Weak Galerkin Finite Element Method for Multi-Term Time-Fractional Diffusion Equations}, E. Asian J. Appl. Math., 8 (2018), pp.~181--193.



\end{thebibliography}
\end{document}